\newtheorem{theorem}{Theorem}[section]
\newtheorem{lemma}[theorem]{Lemma}
\theoremstyle{remark}
\newtheorem{remark}[theorem]{\bf Remark}
\newtheorem{corollary}[theorem]{\bf Corollary}
\newtheorem{proposition}[theorem]{\bf Proposition}
\newcommand{\n}{\mathbb{N}}
\newcommand{\f}{\mathcal{F}}
\begin{document}

\title[Ample generics]{The group of homeomorphisms of the Cantor set has ample generics}

\author{Aleksandra Kwiatkowska}
\subjclass[2010]{03E15,  22A05,   54H20}


\begin{abstract}
 We show that the group of homeomorphisms of the Cantor set $H(2^\n)$ has ample generics, that is,  for every $m$ the diagonal conjugacy action 
$g\cdot(h_1,h_2,\ldots, h_m)=(gh_1g^{-1},gh_2g^{-1},\ldots, gh_mg^{-1})$
of $H(2^\n)$ on $H(2^\n)^m$ has a comeager orbit. This answers a question of Kechris and Rosendal.
We show that a generic tuple in $H(2^\n)^m$ can be taken to be  the limit of a certain projective Fra\"{i}ss\'{e} family. 
 We also give an  example of a projective Fra\"{i}ss\'{e} family, which has a simpler description than the one considered in the general case, and
such that its limit is a homeomorphism of the Cantor set that has a comeager conjugacy class.


\end{abstract}
\maketitle

\section{Introduction}\label{szero}

A group $G$ acts on itself by conjugation $g\cdot h=ghg^{-1}$. Orbits in this action are \emph{conjugacy classes}.
A classical result by Halmos asserts that the group of all measure preserving transformations of the standard Lebesgue space
has a dense conjugacy class; his proof uses the fundamental lemma due to Rokhlin.
Motivated by this, we say that 
a topological group has \emph{RP (the Rokhlin property)} if it has a dense conjugacy class.
It has \emph{SRP (the strong Rokhlin property)}  if it has a comeager  conjugacy class. 
 A comeager conjugacy class necessarily has to be a $G_\delta$ (that is, an  intersection of countably many open sets).

Hodges, Hodkinson, Lascar,  and  Shelah \cite{HHLS}, and then Kechris and Rosendal \cite{KR} studied a much stronger notion of ``largeness'' of conjugacy classes.
A topological group $G$ has \emph{$m$-ample generics} if it has SRP in  dimension $m$, that is, if
 the diagonal conjugacy action of $G$ on $G^m$:
\[
 g\cdot(h_1,h_2,\ldots, h_m)=(gh_1g^{-1},gh_2g^{-1},\ldots, gh_mg^{-1})
\]
has a comeager orbit.
It has \emph{ample generics} if it has $m$-ample generics for every $m$.

This last definition was introduced in \cite{KR}. It is slightly different from the definition given in \cite{HHLS}
(see \cite{M}, Chapter 5.2, for more discussion).

We will  call a tuple from this comeager orbit a \emph{generic tuple}.

Groups with ample generics come up naturally in various contexts.
Examples of such groups include:
\begin{enumerate}
\item the group of all automorphisms of the random graph (Hrushovski \cite{H}, see also Hodges et al. \cite{HHLS});
\item the group of all isometries of the rational Urysohn space (Solecki \cite{S});
\item the group of all Haar measure-preserving homeomorphisms of the Cantor set $H(2^\n, \mu)$ (Kechris and Rosendal \cite{KR});
 \item the group of all Lipschitz homeomorphisms of the Baire space $\n^{\n}$ (Kechris and Rosendal \cite{KR}).
\end{enumerate}

Polish groups (separable and completely metrizable topological groups) with ample generics share many  properties 
connecting their algebraic and topological structure. Kechris and Rosendal \cite{KR} showed that if $G$ is a Polish group
that has ample generics, then the  conditions (1)-(3) below hold. See also  \cite{HHLS} for earlier results.
\begin{enumerate}
 \item Every subgroup of $G$ of index less than $2^{\aleph_0}$ is open (small index property).
\item The group $G$ is not a union of  countably many cosets of non-open subgroups
(in particular, $G$   is not a union of a countable  chain of non-open subgroups).\label{dwa}
\item  Every algebraic homomorphism from $G$ to a separable topological group is continuous.
(This  condition implies  that there is exactly one Polish group topology on $G$.)
\end{enumerate}

By a  \emph{permutation group} we mean a closed subgroup of the group of all permutations of natural numbers equipped with the pointwise convergence metric.
It is well known that a  group is a permutation group  exactly when it is the automorphism group  of a countable structure. 
All known examples of groups with ample generics are permutation groups.
A permutation group is \emph{oligomorphic} if it has  finitely many
orbits on each $\mathbb{N}^n$. Equivalently,  it is oligomorphic when it is the automorphism  group of an $\aleph_0$-categorical structure.
Kechris and Rosendal \cite{KR} showed that for an  oligomorphic  group  $G$ with ample generics the following condition holds.

(4) \ 
Whenever $W_0\subseteq W_1\subseteq \ldots \subseteq G=\bigcup_k W_k$, then there are
 $ n$ and $k$ such that $G = W_k^n$. 
 
Condition (4) is called in \cite{KR} the Bergman property. One should point out that the Bergman property is also used in the literature for a weaker property. Condition (4) is called  uncountable strong cofinality in \cite{DG2} (it is stated there in a slightly different, but  equivalent form).

For more background information  on RP, SRP, and ample generics see \cite{KR} or the survey article \cite{GW1}.

Denote the Cantor set by $2^\n$ and the group of homeomorphisms of the Cantor set by $H(2^\n)$.
Akin, Hurley, and Kennedy \cite{AHK} and independently Glasner and Weiss \cite{GW} showed that $H(2^\n)$ has the Rokhlin property.
Later, this result was strengthened by Kechris and Rosendal \cite{KR} who showed that $H(2^\n)$ has the strong Rokhlin property.
Akin, Glasner, and Weiss \cite{AGW} gave a different proof of this result. Moreover, they gave an explicit description of a \emph{generic} homeomorphism of the Cantor set (that is, a homeomorphism with a comeager conjugacy class).

The main result of the paper is the following.
\begin{theorem}\label{verymain}
 The group of homeomorphisms of the Cantor set has ample generics.
\end{theorem}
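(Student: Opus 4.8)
The plan is to realize a generic $m$-tuple as the limit of a projective Fra\"{i}ss\'e family, in the framework of Irwin--Solecki. Fix $m$, and introduce the class $\f_m$ whose objects are finite discrete structures $\mathbb{A}=(A,f_1^{\mathbb{A}},\dots,f_m^{\mathbb{A}})$, where $A$ is a finite set and each $f_i^{\mathbb{A}}$ is a bijection of $A$ (equivalently, a binary relation which is the graph of a bijection), and whose epimorphisms $\phi\colon\mathbb{B}\to\mathbb{A}$ are the surjections that intertwine the structure, $\phi\circ f_i^{\mathbb{B}}=f_i^{\mathbb{A}}\circ\phi$ for all $i$ (this is the Irwin--Solecki notion of epimorphism for the graph relations). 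The point of $\f_m$ is that its objects are exactly the finite combinatorial shadows of tuples in $H(2^\n)^m$: a clopen partition $\mathcal{P}$ of $2^\n$ indexed by $A$, together with $\bar g=(g_1,\dots,g_m)\in H(2^\n)^m$, induces a structure $\mathbb{A}\in\f_m$ whenever each $g_i$ carries $\mathcal{P}$-cells onto $\mathcal{P}$-cells, and refining $\mathcal{P}$ corresponds to an epimorphism onto $\mathbb{A}$. In practice the finite structures will need to be chosen somewhat more carefully --- with extra markers or side conditions --- so that the extension property below is both residual and sharp enough to pin down a single orbit; organizing this is part of the work.

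Next I would verify that $\f_m$ is a projective Fra\"{i}ss\'e class: it has only countably many isomorphism types, it has the joint projection property, and it has the projective amalgamation property. The amalgamation step requires care but is essentially formal: given epimorphisms $\phi\colon\mathbb{B}\to\mathbb{A}$ and $\psi\colon\mathbb{C}\to\mathbb{A}$, take the fibred product $D=\{(b,c):\phi(b)=\psi(c)\}$ with $f_i^{\mathbb{D}}=f_i^{\mathbb{B}}\times f_i^{\mathbb{C}}$; the intertwining identities force $f_i^{\mathbb{D}}$ to preserve $D$ and to be a bijection of it, and the two coordinate projections are then epimorphisms making the square commute. The Irwin--Solecki machinery now yields the projective Fra\"{i}ss\'e limit $\mathbb{L}=(L,R_1,\dots,R_m)$. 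Since the underlying class of finite \emph{sets} with surjections already has the Cantor set as its limit, $L$ is homeomorphic to $2^\n$; and since an inverse limit of graphs of bijections along intertwining surjections is again the graph of a bijection (and the same applies to the inverse relations), each $R_i$ is the graph of a homeomorphism $h_i$ of $2^\n$. Thus $\bar h=(h_1,\dots,h_m)\in H(2^\n)^m$ is the object we propose as generic.

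The heart of the argument --- and, I expect, the main obstacle --- is to show that the conjugacy class of $\bar h$ is comeager in $H(2^\n)^m$. This is the usual ``a generic object has the extension property'' argument, transported to the projective topological setting. Call $\bar g\in H(2^\n)^m$ \emph{$\f_m$-extendable} if for every clopen partition of $2^\n$ realizing some $\mathbb{A}\in\f_m$ and every epimorphism $\mathbb{B}\to\mathbb{A}$ in $\f_m$, the partition admits a refinement realizing $\mathbb{B}$ over $\mathbb{A}$. One then shows: (i) the set of $\f_m$-extendable tuples is comeager, by writing it as a countable intersection over pairs (finite $\mathbb{A}$-realizing partition, epimorphism out of $\mathbb{A}$) of the sets ``a refining realization exists,'' each of which is open (clopen partitions are finitary data) and dense (density of the refinement is exactly projective amalgamation/weak homogeneity of $\f_m$, applied by a finite modification of the given partition); (ii) $\bar h$ is itself $\f_m$-extendable, which is immediate from homogeneity of the Fra\"{i}ss\'e limit; (iii) any two $\f_m$-extendable tuples $\bar g,\bar g'$ are conjugate, via a back-and-forth construction: build compatible towers of finite clopen partitions of the domain and of the range together with combinatorics-preserving bijections between corresponding partitions, alternately refining each side using the extendability of the other, and pass to the inverse limit to obtain a homeomorphism $\Phi$ of $2^\n$ with $\Phi g_i=g_i'\Phi$ for all $i$, i.e.\ $\Phi\bar g\Phi^{-1}=\bar g'$. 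The delicate points are isolating the correct notion of ``combinatorics'' so that it is stable under refinement while simultaneously controlling all $m$ maps; checking that the back-and-forth limit is a genuine bijection (this is where keeping every finite-stage partition bijection faithful matters); and, most fundamentally, fine-tuning $\f_m$ so that $\f_m$-extendability is at once residual and strong enough to collapse to a single orbit rather than merely a dense one.

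Putting (i)--(iii) together, the set of $\f_m$-extendable tuples is a nonempty comeager subset of $H(2^\n)^m$ contained in a single conjugacy class and containing $\bar h$; hence the conjugacy class of $\bar h$ is comeager. Since this holds for every $m$, the group $H(2^\n)$ has $m$-ample generics for all $m$, i.e.\ it has ample generics, which is Theorem~\ref{verymain}.
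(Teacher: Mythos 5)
Your overall architecture --- a projective Fra\"{i}ss\'e family of finite approximations, a limit which is a tuple of homeomorphisms, and an ``extendability'' property that is comeager and pins down one orbit --- is the right one and is the one the paper follows. But your choice of finite approximating structures is wrong in a way that cannot be repaired by ``extra markers or side conditions,'' and the paper itself demonstrates this. You take finite sets with $m$ \emph{bijections} and intertwining surjections, so that a tuple $\bar g$ realizes a structure on a partition $\mathcal{P}$ only when each $g_i$ maps $\mathcal{P}$-cells \emph{onto} $\mathcal{P}$-cells. These are not the finite combinatorial shadows relevant to the topology of $H(2^\n)^m$: the basic clopen sets are the sets $[P,s_1^P,\dots,s_m^P]$ determined by the \emph{surjective relations} $f_i\restriction P=\{(p,q)\colon f_i(p)\cap q\neq\emptyset\}$ (Lemma \ref{basis}), and these relations are generically not bijections. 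The Example at the end of Section \ref{stwo} treats exactly your family for $m=1$: its projective Fra\"{i}ss\'e limit is the universal adding machine times the identity, and its conjugacy class, while $G_\delta$, is \emph{not dense} --- it misses the nonempty open set $U_P$ of homeomorphisms $f$ with $f(p)\subseteq p$ but $f(p)\neq p$ for a two-cell partition $\{p,q\}$, since a homeomorphism permuting the cells of some finite refinement and mapping $p$ into $p$ must map $p$ onto $p$. Consequently your step (i) fails (the ``a refining realization exists'' sets are not dense), and even granting (ii) and (iii) you would only locate a single non-dense, hence non-comeager, conjugacy class.

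The actual content of the theorem lies precisely where your proposal is schematic. One must work with $m$-tuples of surjective relations (the class $\f_0$ of Section \ref{sthree}), for which joint projection is easy but amalgamation genuinely fails in general: in the fibred product $D_0=\{(b,c)\colon\phi_1(b)=\phi_2(c)\}$ the induced relations need not be surjective, and pruning $D_0$ down to a surjective core can destroy surjectivity of the coordinate projections. Theorems \ref{main} and \ref{cap} isolate structural conditions (every point outgoing for exactly one of $s_1,s_1^{-1},\dots,s_m,s_m^{-1}$; every edge having an outgoing source or an incoming target) under which the pruned fibred product still projects onto both factors, and show that the structures satisfying them are coinitial in $\f_0$. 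Coinitiality is what converts projective universality of the limit into density of its conjugacy class (Proposition \ref{dense}); the $G_\delta$ part (Proposition \ref{gdelta}) is the analogue of your extendability argument. None of this is ``essentially formal,'' and it is absent from your proposal.
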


As  $H(2^\n)$ is an oligomorphic permutation group, as a corollary, we immediately get the following.
\begin{corollary}
 \begin{enumerate}
 \item $H(2^\n)$ has the small index property (Truss \cite{T});
\item $H(2^\n)$ is not a union of  countably many cosets of non-open subgroups;
\item  every algebraic homomorphism from $H(2^\n)$ to a separable topological group is continuous
(Rosendal and Solecki \cite{RS});
 \item
Whenever $W_0\subseteq W_1\subseteq \ldots \subseteq H(2^\n)=\bigcup_k W_k$, then there are
 $ n$ and $k$ such that $H(2^\n) = W_k^n$ (Droste and G\"{o}bel \cite{DG2}).
\end{enumerate}
\end{corollary}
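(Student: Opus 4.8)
The plan is to obtain all four statements as formal consequences of Theorem~\ref{verymain}, combined with the general structural theorems of Kechris and Rosendal recalled in the introduction. Those theorems need only two facts about $H(2^\n)$ beyond ample generics: that it is a Polish group (for conditions (1)--(3)), and that it is moreover an oligomorphic permutation group (for condition (4)). That $H(2^\n)$, with the topology of uniform convergence, is Polish is standard, since it is the homeomorphism group of a compact metric space. The content of the argument is therefore to exhibit $H(2^\n)$ as an oligomorphic permutation group.

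First I would identify $H(2^\n)$ topologically with $\mathrm{Aut}(B)$, where $B$ is the countable atomless Boolean algebra of clopen subsets of $2^\n$. By Stone duality every homeomorphism of $2^\n$ induces an automorphism of $B$, and every automorphism of $B$ extends uniquely to a homeomorphism of $2^\n$; this bijection is a group isomorphism. It is also a homeomorphism: a basic neighborhood of the identity in the uniform topology on $H(2^\n)$ consists of the homeomorphisms fixing each member of some finite clopen partition, and under the identification this is exactly the set of automorphisms fixing pointwise a finite subalgebra of $B$, a basic neighborhood of the identity in $\mathrm{Aut}(B)$. Thus $H(2^\n)$ is a permutation group. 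Since the first-order theory of atomless Boolean algebras is complete and $\aleph_0$-categorical---any two countable atomless Boolean algebras are isomorphic by a back-and-forth argument---the Ryll-Nardzewski theorem gives that $\mathrm{Aut}(B)$ has only finitely many orbits on $B^n$ for every $n$, i.e.\ $H(2^\n)$ is oligomorphic.

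With these facts in hand the corollary follows at once. Theorem~\ref{verymain} supplies ample generics, so by the Kechris--Rosendal results conditions (1)--(3) of the introduction hold for the Polish group $H(2^\n)$, yielding statements (1)--(3); and condition (4) holds because $H(2^\n)$ is additionally oligomorphic, yielding statement (4). The one point deserving care---and the only real obstacle, the substantive mathematics having been discharged in Theorem~\ref{verymain}---is the verification that the abstract isomorphism $H(2^\n)\cong\mathrm{Aut}(B)$ respects the topologies, so that ``Polish'' and ``oligomorphic'' are applied in the sense required by \cite{KR}; everything else is a direct invocation of the quoted theorems.
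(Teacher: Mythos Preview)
Your proposal is correct and follows exactly the approach the paper takes: the paper simply observes that $H(2^\n)$ is an oligomorphic permutation group and invokes the Kechris--Rosendal results from the introduction together with Theorem~\ref{verymain}. You supply more detail than the paper does---in particular the Stone-duality identification of $H(2^\n)$ with $\mathrm{Aut}(B)$ and the $\aleph_0$-categoricity of atomless Boolean algebras---but this is elaboration of the same argument, not a different one.
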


It may be  interesting to compare our results with the results by Hochman \cite{Ho}.
Let $\Gamma$ be a countable discrete group. Let $\mbox{Rep}(\Gamma, H(2^\n))$ be the set of all representations of $\Gamma$ into $H(2^\n)$ (we can also think of it as the
set of all actions of $\Gamma$ on $2^\n$ by homeomorphisms). This is a closed subset of $H(2^\n)^{\Gamma}$.
The group $H(2^\n)$ acts on $\mbox{Rep}(\Gamma, H(2^\n))$ by conjugation. When $\Gamma=F_m$, the free group on $m$ generators, $\mbox{Rep}(\Gamma, H(2^\n))$  can be identified with  $H(2^\n)^m$, and the
action is the diagonal conjugacy action. 
Therefore, saying that $H(2^\n)$ has $m$-ample generics is equivalent to saying that the action of  $F_m$ on  $\mbox{Rep}(F_m, H(2^\n))$ has a comeager orbit.
In contrast,  Hochman \cite{Ho}  showed that all orbits in the action of 
$\mathbb{Z}^m$  ($m>1$) on $\mbox{Rep}(\mathbb{Z}^m, H(2^\n))$ are meager.

The main tool we use in the proof is the \emph{projective Fra\"{i}ss\'{e} theory}
developed by Irwin and Solecki (see \cite{IS}). 
This is a dualization of the Fra\"{i}ss\'{e} theory from model theory.
For each $m$ let $\f_0=\mathcal{F}_0^m$ be the collection of all finite sets $A$ equipped with $m$ directed graphs $s_1^A,s_2^A,\ldots,s_m^A$ such that for every $i$
and every vertex $e\in s_i^A$ there is an edge coming to $e$, and there is an edge going out of $e$.
Maps between members in $\f_0$ are structure preserving surjections.
We show that there is a subfamily $\f=\f^m$ of $\f_0$, which
satisfies the JPP (joint projection property) and the AP (amalgamation property) (Theorem \ref{main})
and is coinitial  in $\f_0$ (Theorem \ref{cap}).
The properties JPP and AP will allow us to take a limit of $\f$ (the projective Fra\"{i}ss\'{e} limit). 
Using the coinitiality of $\f$ in $\f_0$, we show that  this limit is a generic tuple in $H(2^\n)^m$ (Theorem \ref{ample}).

We also present another projective Fra\"{i}ss\'{e}  family, having a simpler description than $\f^1$ considered in the general case,  such that its limit is a generic homeomorphism of the Cantor set.
This will give an alternative proof of the result of Kechris and Rosendal
of the existence of a generic  homeomorphism of the Cantor set. In our proof we identify the class of spiral structures (a~modified
version of finite disjoint unions of finite spirals considered by Akin, Glasner, and Weiss \cite{AGW}) as a projective Fra\"{i}ss\'{e}  family.

The paper is organized as follows.
In Section \ref{sone} we review basic definitions and results on the projective Fra\"{i}ss\'{e} theory.
The proof that $H(2^\n)$ has ample generics is in Section~\ref{sthree}.
In Section~\ref{stwo} we show that the  projective Fra\"{i}ss\'{e} limit of the class of spiral structures
is a generic homeomorphism of the Cantor set.

{\bf{Acknowledgments.}} 
I would like to thank S{\l}awomir Solecki for many suggestions and criticisms that greatly improved the presentation of the paper.
I am also grateful to Kostyantyn Slutskyy for carefully reading a draft of this paper and helpful comments.

\section{Projective Fra\"{i}ss\'{e} theory}\label{sone}
We recall here basic notions and results on the projective Fra\"{i}ss\'{e} theory   developed by Irwin and Solecki in \cite{IS}.

 Given a language $L$ that consists of relation symbols $\{R_i\}_{i\in I}$, and function symbols $\{f_j\}_{\in J}$,
a \emph{topological $L$-structure} is a compact zero-dimensional second-countable space $A$ equipped with
closed relations $R_i^A$ and continuous functions $f_j^A$, $i\in I, j\in J$.
A continuous surjection $\phi\colon B\to A$ is an
 \emph{epimorphism} if it preserves the structure, more precisely, for a function symbol $f$ of arity $n$ and $x_1,\ldots,x_n\in B$ we require:
\[
 f^A(\phi(x_1),\ldots,\phi(x_n))=\phi(f^B(x_1,\ldots,x_n));
\]
and for a relation symbol $R$ of arity $m$ and $x_1,\ldots,x_m\in B$  we require:
\begin{equation*}
\begin{split}
&  (x_1,\ldots,x_m)\in R^A \\ 
&\iff \exists y_1,\ldots,y_m\in B\left(\phi(y_1)=x_1,\ldots,\phi(y_m)=x_m, \mbox{ and } (y_1,\ldots,y_m)\in R^B\right).
\end{split}
\end{equation*}
By an \emph{isomorphism}  we mean a bijective epimorphism.

For the rest of this section fix a language $L$.
Let $\mathcal{F}$ be a family of finite topological\\ $L$-structures. We say that $\mathcal{F}$ is a \emph{ projective Fra\"{i}ss\'{e} family}
if the following two conditions hold:

(F1) (joint projection property: JPP) for any $A,B\in\f$ there are $C\in \f$ and epimorphisms from $C$ onto $A$ and from $C$ onto $B$;

(F2) (amalgamation property: AP) for $A,B_1,B_2\in\f$ and any epimorphisms $\phi_1\colon B_1\to A$ and $\phi_2\colon B_2\to A$, there exist $C$,
 $\phi_3\colon C\to B_1$, and $\phi_4\colon C\to B_2$ such that $\phi_1\circ \phi_3=\phi_2\circ \phi_4$.

A topological $L$-structure $\mathbb{L}$ is a \emph{ projective Fra\"{i}ss\'{e} limit } of $\mathcal{F}$ if the following three conditions hold:

(L1) (projective universality) for any $A\in\f$ there is an epimorphism from $\mathbb{L}$ onto $A$;

(L2) for any finite discrete topological space $X$ and any continuous function
 $f\colon \mathbb{L} \to X$ there are $A\in\f$, an epimorphism $\phi\colon \mathbb{L}\to A$, and a function
$f_0\colon A\to X$ such that $f = f_0\circ \phi$.

(L3) (projective ultrahomogeneity) for any $A\in \f$ and any epimorphisms $\phi_1\colon \mathbb{L}\to A$ and $\phi_2\colon \mathbb{L}\to A$
there exists an isomorphism $\psi\colon \mathbb{L}\to \mathbb{L}$ such that $\phi_2=\phi_1\circ \psi$;

Here is the fundamental result in the projective Fra\"{i}ss\'{e} theory: 
\begin{theorem}[Irwin-Solecki, \cite{IS}]\label{bfraisse}
 Let $\f$ be a countable projective Fra\"{i}ss\'{e} family of finite topological $L$-structures. Then:
\begin{enumerate}
 \item there exists a projective Fra\"{i}ss\'{e} limit of $\f$;\\
\item any two topological $L$-structures that are projective Fra\"{i}ss\'{e} limits are isomorphic.
\end{enumerate}
\end{theorem}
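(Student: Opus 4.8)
The plan is to run the classical Fra\"iss\'e construction in the dual, inverse-limit setting: realize $\lan$ as the inverse limit of a suitably generic inverse sequence from $\f$, and prove part (2) by a back-and-forth at the level of the limits. For part (1), start from any $A_0\in\f$ and recursively build an inverse sequence $A_0\leftarrow A_1\leftarrow A_2\leftarrow\cdots$ of members of $\f$ with bonding epimorphisms $p^{n+1}_n\colon A_{n+1}\to A_n$; for $m<n$ write $p^n_m=p^{m+1}_m\circ\cdots\circ p^n_{n-1}\colon A_n\to A_m$. A standard bookkeeping arranges two families of requirements: (a) for every $A_k$ in the sequence and every epimorphism $f\colon B\to A_k$ with $B\in\f$, there are $m\ge k$ and an epimorphism $g\colon A_m\to B$ with $f\circ g=p^m_k$; and (b) every $B\in\f$ is an epimorphic image of some $A_n$. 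A requirement of type (a) is met in one step: with $A_n$ the current last term, apply (F2) to $p^n_k\colon A_n\to A_k$ and $f\colon B\to A_k$, let the resulting structure be $A_{n+1}$, take the epimorphism onto $A_n$ as $p^{n+1}_n$ and the epimorphism onto $B$ as $g$; then $f\circ g=p^{n+1}_k$. A requirement of type (b) is met by applying (F1) to $A_n$ and $B$. Since $\f$ is countable with finite members, there are only countably many requirements, so the dovetailing is routine. Let $\lan=\varprojlim(A_n,p^{n+1}_n)\subseteq\prod_nA_n$ with projections $\pi_n\colon\lan\to A_n$, interpret each function symbol coordinatewise, and interpret a $k$-ary relation symbol $R$ by declaring $\bar x\in R^\lan$ iff $(\pi_n(x_1),\dots,\pi_n(x_k))\in R^{A_n}$ for every $n$.

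The space $\lan$ is compact, zero-dimensional and second countable, the $R^\lan$ are closed (intersections of closed sets), the $f^\lan$ are continuous, and $\{\pi_n^{-1}(a):n\in\n,\,a\in A_n\}$ is a clopen basis, so $\lan$ is a topological $L$-structure; surjectivity of $\pi_n$ and preservation of functions are immediate. I expect the single non-routine point --- the main obstacle, and the place where the dual theory demands more than the classical one --- to be the existential clause in the definition of epimorphism for $\pi_n$: given $\bar a\in R^{A_n}$, one must produce $\bar y\in R^\lan$ lying over $\bar a$. Since each bonding epimorphism carries $R$ forward (if $\bar c\in R^{A_{m+1}}$ then $p^{m+1}_m(\bar c)\in R^{A_m}$) and pulls it back (each tuple in $R^{A_m}$ has a lift in $R^{A_{m+1}}$), the nonempty finite sets $S_m=\{\bar c\in R^{A_m}:p^m_n(\bar c)=\bar a\}$ ($m\ge n$) form an inverse system, which has a thread by compactness (K\"onig's lemma); the thread is the desired $\bar y$. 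Granting this, (L1) follows by composing an epimorphism $A_n\to B$ from (b) with $\pi_n$, and (L2) is a compactness argument: a continuous map from the compact space $\lan$ to a finite discrete space has clopen fibres, hence is constant on the fibres of a single $\pi_n$, and therefore factors through $\pi_n$.

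For (L3), let $\phi_1,\phi_2\colon\lan\to A$ be epimorphisms with $A\in\f$. Using (L2) and compactness, factor $\phi_i=\alpha_i\circ\pi_n$ through one common stage $A_n$, where the $\alpha_i\colon A_n\to A$ are epimorphisms. Apply (F2) to $\alpha_1,\alpha_2$ to produce the first link of a zigzag $A_{n_0}\leftarrow A_{m_0}\leftarrow A_{n_1}\leftarrow A_{m_1}\leftarrow\cdots$ whose maps commute with the bonding maps of the main sequence, with $n_0=n$ and the link chosen so that the automorphism to be produced will satisfy $\phi_2=\phi_1\circ\psi$; then extend the zigzag by applying property (a) repeatedly --- lifting the last intermediate epimorphism, alternately on the two sides --- so that $(n_i)$ and $(m_i)$ become cofinal in $\n$. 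Passing to inverse limits along the two cofinal subsequences identifies $\lan$ with $\lan$ via an automorphism $\psi$, which by the choice of the first link satisfies $\phi_2=\phi_1\circ\psi$. This establishes (1).

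For part (2), let $\lan$ and $\lan'$ both satisfy (L1)--(L3). Two consequences of the axioms, valid for $\lan$ and equally for $\lan'$, drive the argument. From (L1) and (L3): for all epimorphisms $p\colon\lan\to A$ and $f\colon B\to A$ with $A,B\in\f$ there is an epimorphism $g\colon\lan\to B$ with $f\circ g=p$; indeed, take an epimorphism $\rho\colon\lan\to B$ from (L1), use (L3) to get an automorphism $\theta$ of $\lan$ with $p=f\circ\rho\circ\theta$, and set $g=\rho\circ\theta$. From (L2): any two epimorphisms $p\colon\lan\to A$ and $q\colon\lan\to B$ ($A,B\in\f$) admit a common refinement, i.e.\ there are $C\in\f$, an epimorphism $\mu\colon\lan\to C$ and epimorphisms $r\colon C\to A$, $s\colon C\to B$ with $r\circ\mu=p$ and $s\circ\mu=q$; this follows by applying (L2) to the continuous map $(p,q)\colon\lan\to A\times B$, the factor maps being epimorphisms because $\mu,p,q$ are. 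Fixing countable clopen bases of $\lan$ and of $\lan'$, a back-and-forth alternating these two facts produces an inverse sequence $C_0\leftarrow C_1\leftarrow\cdots$ in $\f$ together with epimorphisms $a_i\colon\lan\to C_i$ and $b_i\colon\lan'\to C_i$ commuting with the bonding maps, such that the $a_i$ separate the points and generate the topology of $\lan$ while the $b_i$ do likewise for $\lan'$. Then $\lan\cong\varprojlim C_i\cong\lan'$, and tracing the maps shows this bijection is an isomorphism of topological $L$-structures, which is (2).
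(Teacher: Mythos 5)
The paper gives no proof of this theorem; it is quoted from Irwin--Solecki \cite{IS}, with the reader referred there for proofs. Your argument is correct and is essentially the standard one from that source: build a generic inverse sequence in $\f$ using (F1)/(F2) with countable dovetailing, realize the limit as $\varprojlim A_n$ with relations defined coordinatewise and the existential clause of the epimorphism condition supplied by a K\"onig's-lemma thread through the nonempty finite fibres, and prove ultrahomogeneity and uniqueness by a back-and-forth driven by the extension property and (L2)-refinement --- this also matches the inverse-limit description the paper records as Proposition \ref{inverse}.
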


In the propositions below we state some properties of the projective Fra\"{i}ss\'{e} limit.

\begin{proposition}\label{fraisse}
\begin{enumerate}
 \item If $\mathbb{L}$ is the projective Fra\"{i}ss\'{e} limit the following condition (called \emph{the extension property}) holds:
Given $\phi_1\colon B\to A$, $A,B\in\f$, and $\phi_2\colon \mathbb{L}\to A$, then, there is $\psi\colon \mathbb{L}\to B$ such that $\phi_2=\phi_1\circ \psi$.\\
\item If $\mathbb{L}$ satisfies  projective universality, the extension property, and (L2), then it also satisfies  projective ultrahomogeneity, and therefore
is isomorphic to the projective Fra\"{i}ss\'{e} limit.\label{uep}
\end{enumerate}
\end{proposition}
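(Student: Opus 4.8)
The plan is to obtain (1) directly from projective universality and projective ultrahomogeneity, and to prove (2) by a back-and-forth construction whose finite approximations are supplied by (L2). For (1), suppose we are given $\phi_1\colon B\to A$ with $A,B\in\f$ and $\phi_2\colon\mathbb{L}\to A$. By projective universality there is an epimorphism $\rho\colon\mathbb{L}\to B$, and since a composition of epimorphisms is again an epimorphism (a one-line check from the definition that I would record once, as it is used throughout), $\phi_1\circ\rho\colon\mathbb{L}\to A$ is an epimorphism. Applying projective ultrahomogeneity (L3) to the two epimorphisms $\phi_2$ and $\phi_1\circ\rho$ of $\mathbb{L}$ onto $A$ yields an isomorphism $\eta\colon\mathbb{L}\to\mathbb{L}$ with $\phi_1\circ\rho=\phi_2\circ\eta$, and then $\psi:=\rho\circ\eta^{-1}\colon\mathbb{L}\to B$ is an epimorphism satisfying $\phi_1\circ\psi=\phi_2$.

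For (2) we must derive projective ultrahomogeneity from projective universality, the extension property, and (L2); fix $A\in\f$ and epimorphisms $\phi_1,\phi_2\colon\mathbb{L}\to A$, and seek an isomorphism $\psi\colon\mathbb{L}\to\mathbb{L}$ with $\phi_1\circ\psi=\phi_2$. The first ingredient is a small ``refinement inside $\f$'' lemma: given epimorphisms $\rho\colon\mathbb{L}\to C$ and $\rho'\colon\mathbb{L}\to C'$ with $C,C'\in\f$, there are $D\in\f$, an epimorphism $\tau\colon\mathbb{L}\to D$, and an epimorphism $q\colon D\to C$ with $\rho=q\circ\tau$ and with $\rho'$ also factoring through $\tau$. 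This follows by applying (L2) to the continuous map $(\rho,\rho')\colon\mathbb{L}\to C\times C'$ into the finite discrete space $C\times C'$, and then noting that the induced surjection $D\to C$ is an epimorphism because $\rho$ and $\tau$ are. Iterating this lemma, together with the fact that $\mathbb{L}$ is compact, second countable, and zero-dimensional, produces a sequence of epimorphisms $\pi_n\colon\mathbb{L}\to A_n$ with $A_n\in\f$ whose induced maps separate the points of $\mathbb{L}$.

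Now I would run the back-and-forth, building $E_n\in\f$, epimorphisms $f_n,g_n\colon\mathbb{L}\to E_n$, and epimorphisms $\lambda_n\colon E_{n+1}\to E_n$ with $E_0=A$, $f_0=\phi_2$, $g_0=\phi_1$, $\lambda_n\circ f_{n+1}=f_n$, $\lambda_n\circ g_{n+1}=g_n$, and such that each $\pi_k$ is factored by some $f_n$ and by some $g_n$. The stages alternate between the two sides; at a $g$-stage, with $f_n,g_n\colon\mathbb{L}\to E_n$ already built, apply the refinement lemma to $g_n$ and the next unhandled $\pi_k$ to get $D\in\f$, an epimorphism $h\colon\mathbb{L}\to D$ factoring $\pi_k$, and an epimorphism $p\colon D\to E_n$ with $g_n=p\circ h$; then apply the extension property to $p\colon D\to E_n$ and $f_n$ to obtain an epimorphism $f_{n+1}\colon\mathbb{L}\to D$ with $p\circ f_{n+1}=f_n$, and set $E_{n+1}=D$, $g_{n+1}=h$, $\lambda_n=p$; an $f$-stage is the mirror image. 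The $(f_n)$ then induce a continuous map $F\colon\mathbb{L}\to\varprojlim_n E_n$ that is injective (the $f_n$ eventually factor every $\pi_k$, hence separate points), surjective (its image is a closed subset of $\varprojlim_n E_n$ surjecting onto every $E_n$, hence everything), and an isomorphism of topological $L$-structures because the $f_n$ and $\lambda_n$ are epimorphisms and each relation $R^{\mathbb{L}}$ is closed, so coincides with the intersection over $n$ of the preimages under the $f_n$ of the corresponding relations on the $E_n$. Symmetrically the $(g_n)$ give an isomorphism $G\colon\mathbb{L}\to\varprojlim_n E_n$, and $\psi:=G^{-1}\circ F$ is an isomorphism of $\mathbb{L}$ with $g_n\circ\psi=f_n$ for all $n$; taking $n=0$ gives $\phi_1\circ\psi=\phi_2$, which is (L3). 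Since $\mathbb{L}$ now satisfies (L1)--(L3), Theorem~\ref{bfraisse}(2) gives that $\mathbb{L}$ is isomorphic to the projective Fra\"{i}ss\'{e} limit.

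I expect the main obstacle to be not the bookkeeping in the back-and-forth but the verification that the map $\psi$ produced at the end is an \emph{isomorphism of topological $L$-structures} rather than merely a homeomorphism: this is exactly where the epimorphism conditions on the $f_n$, $g_n$, and $\lambda_n$, the closedness of the relations of $\mathbb{L}$, and a compactness argument must be combined with care. The auxiliary ``refinement inside $\f$'' lemma is the other place where a genuine, if short, argument is needed.
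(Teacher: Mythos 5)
Your proposal is correct and matches the intended argument: the paper gives no written proof but points to the uniqueness argument of \cite{IS}, and your back-and-forth for part (2) (with the (L2)-based refinement lemma, the extension property supplying the lifts, and the compactness argument identifying $R^{\mathbb{L}}$ with the intersection of the pulled-back relations) is exactly that argument, while your derivation of part (1) from (L1) and (L3) is the standard one. No gaps.
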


The projective Fra\"{i}ss\'{e} limit is  the inverse limit of certain topological $L$-structures from $\f$.
More precisely, we have the following:

\begin{proposition}\label{inverse}
Let $\f$ be a countable projective Fra\"{i}ss\'{e} family of finite topological $L$-structures. Let $\mathbb{L}$ be its projective Fra\"{i}ss\'{e} limit.
Then, there are $D_1,D_2,D_3,\ldots \in \f$ and $\pi_i\colon D_{i+1}\to D_i$ such that 
$\mathbb{L}$ is the inverse limit of 
\[
\begin{CD}
D_1 @<\pi_1<< D_2 @<\pi_2<< D_3 @<\pi_3<<\ldots,
\end{CD}
\]
and moreover, the following two properties hold:
\begin{enumerate}
 \item For each $A\in\mathcal{F}$ there is $i$ and there is an epimorphism $\phi\colon D_i\to A$.\\
\item For all pairs of epimorphisms $\phi_1\colon B\to A$ and $\phi_2\colon D_i\to A$ there is $j>i$ and $\psi\colon D_j\to B$
 such that $\phi_1\circ \psi =\phi_2\circ \pi^j_i$, where
$\pi^j_i=  \pi_i\circ\ldots\circ \pi_{j-1}$.

\end{enumerate}
\end{proposition}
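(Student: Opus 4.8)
\textbf{Proof proposal for Proposition \ref{inverse}.}

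The plan is to construct the sequence $D_1 \xleftarrow{\pi_1} D_2 \xleftarrow{\pi_2} D_3 \xleftarrow{\pi_3} \cdots$ from $\f$ directly by a recursion with bookkeeping, built so that properties (1) and (2) hold by fiat; then to show that its inverse limit $\mathbb{L}'$ satisfies projective universality, the extension property, and (L2); and finally to invoke Proposition \ref{fraisse}(\ref{uep}) and the uniqueness part of Theorem \ref{bfraisse} to identify $\mathbb{L}'$ with $\mathbb{L}$.

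First I would fix an enumeration $A_1,A_2,\ldots$ of $\f$ up to isomorphism. Since $\f$ is countable and all its members are finite, for any finite $L$-structure $D$ there are only countably many epimorphisms from $D$ onto members of $\f$, and only countably many pairs $(\phi_1\colon B\to A,\ \phi_2\colon D\to A)$ of epimorphisms with $A,B\in\f$. I would therefore dovetail a list of ``tasks'': a type-(i) task is to absorb some $A_k$, and a type-(ii) task, scheduled only at stages $n\ge i$, is to amalgamate a pair $(\phi_1\colon B\to A,\ \phi_2\colon D_i\to A)$; the bookkeeping is arranged so every type-(i) task and (once $D_i$ exists) every type-(ii) task is served at some stage. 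The recursion: take $D_1\in\f$ arbitrary; at stage $n$, given $D_1,\ldots,D_n$, if the task is type (i) for $A_k$, use JPP (F1) to get $D_{n+1}\in\f$ with an epimorphism $\pi_n\colon D_{n+1}\to D_n$ and an epimorphism $D_{n+1}\to A_k$; if the task is type (ii) for $(\phi_1\colon B\to A,\ \phi_2\colon D_i\to A)$, apply AP (F2) to $\phi_1$ and $\phi_2\circ\pi_i^n\colon D_n\to A$ to get $D_{n+1}\in\f$, an epimorphism $\pi_n\colon D_{n+1}\to D_n$, and an epimorphism $\psi\colon D_{n+1}\to B$ with $\phi_1\circ\psi=\phi_2\circ\pi_i^n\circ\pi_n=\phi_2\circ\pi_i^{n+1}$. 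Serving all type-(i) tasks yields (1) (compose $D_i\to A_k$ is obtained for some $i$), and serving all type-(ii) tasks yields (2).

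Next I would set $\mathbb{L}'=\varprojlim(D_i,\pi_i)$ with projections $p_i\colon\mathbb{L}'\to D_i$ and recall the standard facts about inverse limits of finite topological $L$-structures along surjective bonding maps: $\mathbb{L}'$ is a nonempty compact second-countable zero-dimensional topological $L$-structure, each $p_i$ is a surjective epimorphism, $\pi_i^j\circ p_j=p_i$ for $j>i$, and every continuous map from $\mathbb{L}'$ to a finite discrete space factors as $f_0\circ p_i$ for some $i$. This last fact gives (L2) at once (take $A=D_i$, $\phi=p_i$). Projective universality follows from property (1): for $A\in\f$ choose $i$ and an epimorphism $\phi\colon D_i\to A$; then $\phi\circ p_i\colon\mathbb{L}'\to A$ is an epimorphism, since a composition of epimorphisms is an epimorphism. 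For the extension property, given epimorphisms $\phi_1\colon B\to A$ and $\phi_2\colon\mathbb{L}'\to A$, I would first factor $\phi_2=g\circ p_i$ through some $p_i$ (using finiteness of $A$); since $\phi_2$ and $p_i$ are epimorphisms and $p_i$ is surjective, $g\colon D_i\to A$ is an epimorphism; applying property (2) to the pair $(\phi_1,g)$ gives $j>i$ and an epimorphism $\psi_0\colon D_j\to B$ with $\phi_1\circ\psi_0=g\circ\pi_i^j$, whence $\psi:=\psi_0\circ p_j$ is an epimorphism $\mathbb{L}'\to B$ with $\phi_1\circ\psi=g\circ\pi_i^j\circ p_j=g\circ p_i=\phi_2$. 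Now Proposition \ref{fraisse}(\ref{uep}) yields projective ultrahomogeneity, so $\mathbb{L}'$ is a projective Fra\"{i}ss\'{e} limit of $\f$, and by Theorem \ref{bfraisse}(2) it is isomorphic to $\mathbb{L}$; transporting the sequence along this isomorphism gives the claim.

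I expect the one genuinely delicate point to be organizational: the bookkeeping must schedule each type-(ii) task involving $D_i$ only after $D_i$ has been built, while still guaranteeing every such task is eventually served — a routine dovetailing over a requirement set that grows with the stage. The remaining auxiliary facts — that a composition of epimorphisms is an epimorphism, and that if $\phi_2=g\circ p$ with $\phi_2$ an epimorphism and $p$ a surjective epimorphism then $g$ is an epimorphism — are straightforward verifications from the definition, handling the relation symbols by lifting witnesses along the surjections; I would state them as a preliminary lemma rather than dwell on them.
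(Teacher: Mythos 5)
Your proposal is correct, and it is essentially the argument the paper points to: the paper gives no proof of its own but refers to the proof of Theorem 2.4 in Irwin--Solecki, which proceeds exactly by this kind of bookkeeping construction of a sequence satisfying (1) and (2) via JPP and AP, followed by verification that its inverse limit has projective universality, the extension property, and (L2), and then an appeal to uniqueness. The auxiliary facts you defer to a preliminary lemma (compositions of epimorphisms are epimorphisms; a factorization $\phi_2=g\circ p_i$ of an epimorphism through a surjective epimorphism forces $g$ to be an epimorphism) do check out by lifting witnesses, so there is no gap.
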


For more background information on the projective Fra\"{i}ss\'{e} theory and for proofs see \cite{IS} (the proof of Proposition \ref{inverse} is included in the proof of Theorem 2.4 in \cite{IS},
 and the proof of Proposition \ref{fraisse} (ii) goes along the lines of the proof of the uniqueness of the projective Fra\"{i}ss\'{e} limit in \cite{IS}).
For a category-theoretic approach to related issues we refer the reader to \cite{DG1}.

\section{Spiral structures form a projective Fra\"{i}ss\'{e} family}\label{stwo}

The goal of this section is to show that a generic homeomorphism of the Cantor set can be realized as a projective Fra\"{i}ss\'{e} limit
of the class of spiral structures (defined below).
Many ideas in this section are motivated by \cite{AGW}.

\medskip

{\bf{Definition of a spiral structure.}}
Let $R$ be a binary relation symbol.
We define a \emph{spiral} $N=(N,R^N)$ to be the set 
$N=\{1,2,\ldots, n\}$ with two distinguished points $x_N$ and $y_N$ such that $1<x_N<y_N<n$ 
 (we will be referring to them, respectively, as the \emph{left node} of $N$ and the \emph{right node} of $N$), 
equipped with the relation $R^N$ such that
 $R^N(i,i+1)$ for every $i=1,2,\ldots ,n-1$, $R^N(x_N,1)$, and $R^N(n,y_N)$.
See also Figure 1.

\begin{figure}[ht]
\begin{center}
\scalebox{0.2}[0.2]{\includegraphics{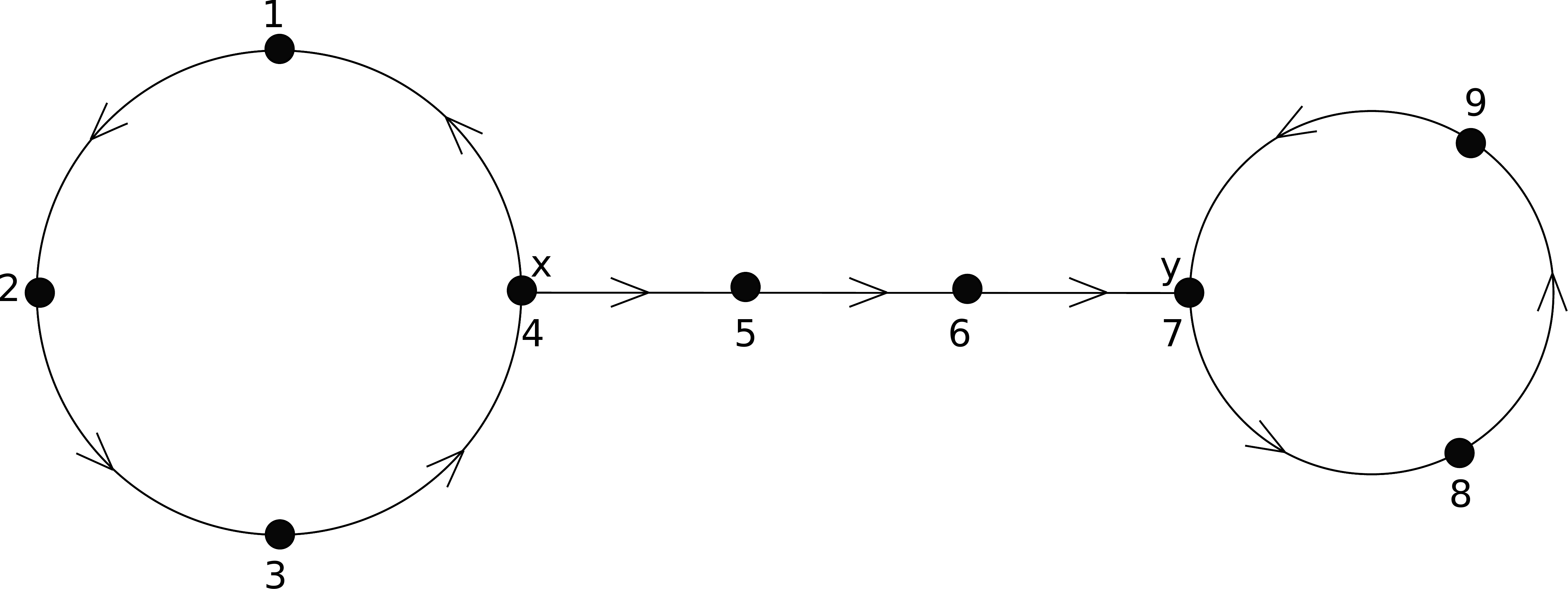}} 
\caption{A spiral}
\label{fig:fig-dots}
\end{center}
\end{figure}

We will call the interval $[1,x_N]$ the {\emph{left circle}} of $N$ and denote it by $l_N$, we
will call the interval $[y_N,n]$ the {\emph{right circle}} of $N$ and denote it by $r_N$, and
we will call the interval $[x_N,y_N]$ the {\emph{middle line}} of $N$ and denote it by $s_N$.
Denote by $|l_N|$ the number of elements in the left circle in $N$, by $|s_N|$ the number of elements  in the middle line of $N$, and
by $|r_N|$ the number of elements in the right circle in $N$.

Spirals come up when we consider a homeomorphism of the Cantor set acting on clopen sets  of the Cantor set.
Take any $f\in H(2^\n)$ and a clopen partition $P$ of $2^\n$.
For $p_0,p_1\in P$ with $f(p_0)\cap p_1\neq \emptyset$ we can choose (usually, in many ways)
a bi-infinite sequence $(p_i)_{i\in \mathbb{Z}}$ with $f(p_i)\cap p_{i+1}\neq \emptyset$, $ {i\in \mathbb{Z}}$, which is eventually periodic as $i\to +\infty$ and $i\to -\infty$; say $\ldots, p_{k-1},p_k$ has period $K$, and $p_l,p_{l+1},\ldots$ has period $L$, where $k<l$. 
Then, we can identify the sequence $p_{k-K+1},\ldots,   p_{k-1},p_k, \ldots p_l,p_{l+1},\ldots p_{l+L-1}$ with a spiral ($p_l$ and $p_k$ become the left and the right node, respectively).
Notice that $f(p_i)\cap p_{i+1}\neq \emptyset$ for every $i=k-K+1,\ldots ,l+L-2$, $f(p_l)\cap p_{l-L+1}\neq \emptyset$, and $f(p_{k+K-1})\cap p_k\neq \emptyset$.

By a \emph{spiral structure} we mean a disjoint union of spirals.
Let $\mathcal{G}$ be the collection of all spiral structures.
The main goal of this section is to show:

\begin{theorem}\label{abcd}
 \begin{enumerate}
  \item The class $\mathcal{G}$ of spiral structures  is a projective Fra\"{i}ss\'{e} family.\\
\item The projective Fra\"{i}ss\'{e} limit of $\mathcal{G}$ is a generic homeomorphism of the Cantor set.
\end{enumerate}
\end{theorem}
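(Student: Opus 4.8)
The plan is to establish parts (1) and (2) of Theorem~\ref{abcd} in sequence, treating $\mathcal{G}$ much as a classical Fra\"{i}ss\'{e} class is treated, but dualized.

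\textbf{Part (1): $\mathcal{G}$ is a projective Fra\"{i}ss\'{e} family.} First I would fix the correct notion of epimorphism between spiral structures: a continuous (automatically, since finite) surjection $\phi\colon B\to A$ that carries the relation $R^B$ onto $R^A$ in the sense of Section~\ref{sone}. I would record an explicit combinatorial description of such maps — a spiral maps onto a spiral by "winding around'' it, i.e. the left circle, middle line, and right circle of the domain cover the corresponding pieces of the target with the multiplicities forced by the $R$-relation, and an epimorphism between spiral structures is a disjoint union of such windings. The JPP (F1) is the easy half: given spirals $A$ and $B$, I would build a common spiral $C$ with $|l_C|, |s_C|, |r_C|$ chosen to be appropriate common multiples of the corresponding lengths of $A$ and of $B$, so that $C$ winds onto each; for spiral \emph{structures} (disjoint unions) one takes a disjoint union of such $C$'s, one for each pair of components, mapping onto the appropriate components. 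The real work is the AP (F2): given epimorphisms $\phi_1\colon B_1\to A$ and $\phi_2\colon B_2\to A$, I would amalgamate component by component over each spiral summand of $A$, constructing a spiral $C$ whose three parameters are chosen (via a least-common-multiple / Chinese-remainder style computation along the cyclic and linear parts) so that $C$ admits windings $\phi_3\colon C\to B_1$, $\phi_4\colon C\to B_2$ with $\phi_1\phi_3=\phi_2\phi_4$. The bookkeeping on the lengths of the left circle, right circle, and middle line — making the two composites literally agree as maps, not just abstractly — is where I expect the main obstacle to lie; this is the step that forces the precise definition of "spiral'' (with the nodes strictly interior) and is the technical heart of the section. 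Closure of $\mathcal{G}$ under the relevant operations and countability are immediate.

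\textbf{Part (2): the limit is a generic homeomorphism.} Let $\lan$ be the projective Fra\"{i}ss\'{e} limit of $\mathcal{G}$, realized by Proposition~\ref{inverse} as an inverse limit $\lan=\varprojlim D_i$ with bonding maps $\pi_i$. The first task is to show that the quotient of $\lan$ by the closed equivalence relation $R^{\lan}$ ``symmetrized'' appropriately is the Cantor set, and that $R^{\lan}$ itself, viewed as a relation, is (the graph of) a homeomorphism $f_{\lan}$ of that Cantor set: one checks that $R^{\lan}=\bigcap_i (\pi^{\infty}_i\times\pi^{\infty}_i)^{-1}(R^{D_i})$ is the graph of a function with dense domain and range, and that projective ultrahomogeneity forces it to be a total bijection, hence a homeomorphism since $\lan$ is compact metrizable. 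The identification with $H(2^\n)$ then comes from checking $\lan$ is zero-dimensional, perfect, compact, metrizable — here one uses that spirals can always be refined (larger parameters), so no point of $\lan$ is isolated.

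For the genericity statement I would use the standard Kechris--Rosendal criterion: $f_{\lan}$ has comeager conjugacy class in $H(2^\n)$ iff the family of ``finite approximations'' — here, finite clopen partitions together with the induced combinatorial data, which is exactly the data of a spiral structure by the discussion preceding Theorem~\ref{abcd} — has the joint embedding and weak amalgamation (or, here, genuine amalgamation) properties and is ``cofinal'' among all such approximations. The JPP and AP are supplied by Part~(1); the remaining point is \emph{cofinality}: every pair $(P,f)$ with $P$ a finite clopen partition of $2^\n$ refines to one whose combinatorial type lies in $\mathcal{G}$ and is realized by some $D_i$ in the inverse system. This uses the explicit construction, recalled before the theorem, that attaches a spiral structure to any homeomorphism-plus-partition by following bi-infinite orbits of clopen pieces, together with the extension property (Proposition~\ref{fraisse}(1)) to match such a structure into $\lan$. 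Assembling these, a generic element of the conjugacy-class-of-$f_{\lan}$ is exactly an element conjugate to $f_{\lan}$, and comeagerness follows from the projective ultrahomogeneity of $\lan$ translated through the correspondence between conjugating homeomorphisms and isomorphisms of $\lan$. The main obstacle in Part~(2) is verifying the cofinality of $\mathcal{G}$ among all finite approximations with the maps being genuine epimorphisms — i.e. that the abstract Fra\"{i}ss\'{e}-theoretic limit really does sit inside $H(2^\n)$ with the right genericity, rather than inside some larger or smaller space — and this is precisely the analogue of Theorem~\ref{cap} (coinitiality) in the one-dimensional spiral setting.
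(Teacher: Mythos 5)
Your plan for part (1) follows the paper's architecture (windings determined by common multiples of the circle lengths for the JPP; componentwise amalgamation for the AP), but the step you defer is exactly where the content lies, and your description of it is not quite right. The amalgamation is not a Chinese-remainder computation: given $f_1\colon L\to K$ and $f_2\colon M\to K$ with $f_2$ onto, one must split into three cases according to whether $f_1$ maps the spiral $L$ onto all of $K$, onto its left circle only, or onto its right circle only, and in each case build $N$ with $|l_N|$, $|r_N|$ suitable common multiples of circle lengths and $|s_N|$ merely long enough; the commutativity $f_1\circ f_3=f_2\circ f_4$ then comes for free from the uniqueness clause of Proposition \ref{map}: two relation-preserving windings of $N$ onto $K$ that send the same two points of $s_N$ to the nodes of $K$ coincide. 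You also omit the asymmetric reduction used to pass from spirals to spiral structures: for each spiral of $B_1$ one chooses a spiral of $B_2$ that maps \emph{onto} the relevant spiral of $A$, requires $f_3$ to be onto but $f_4$ only relation-preserving, and then repeats with the roles of $B_1$ and $B_2$ exchanged.

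In part (2) there are two genuine problems. First, no quotient of $\lan$ is taken: $\lan$ itself is the Cantor set (Proposition \ref{cantor}, proved by splitting any would-be isolated point via the extension property), and $R^{\lan}$ is the graph of a homeomorphism of $\lan$; ``projective ultrahomogeneity forces it to be a total bijection'' is not an argument. Single-valuedness is the point that actually uses spirals: given $\psi_1\colon\lan\to A$ separating $\beta_1$ from $\beta_2$, one refines through some $B\to A$ arranged so that every preimage of $\psi_1(\alpha)$ has a unique $R^B$-successor (possible because in a spiral only the left node has two successors), and the extension property then forces $\psi_1(\beta_1)=\psi_1(\beta_2)$, a contradiction. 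Second, the paper does not invoke the Kechris--Rosendal weak-amalgamation criterion; it proves directly that the conjugacy class is dense (as in Proposition \ref{dense}: coinitiality of $\mathcal{G}$ in the class of all surjective relations plus projective universality lets one realize any prescribed restriction to a clopen partition after conjugating) and $G_\delta$ (as in Proposition \ref{gdelta}: the homeomorphisms satisfying (L1), the extension property and (L2) form a $G_\delta$ set, and by Proposition \ref{fraisse}(ii) these are exactly the conjugates of the limit). Your route through the KR criterion could probably be made to work, but it requires translating between partial automorphisms of finite Boolean algebras and surjective relations on clopen partitions, and deducing WAP for the full class from AP on the coinitial subclass --- none of which you supply --- whereas the paper's direct argument avoids this entirely.
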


{\bf{Maps between spiral structures.}}
We want  to understand epimorphisms between two spiral structures.  
First note that:
\begin{remark}
Let $\phi\colon N\to M$ be an epimorphism between spiral structures. Then, the image of each spiral in $N$ is contained in some spiral of $M$.
Even more, it is either equal to a spiral in $M$, or it is equal to the left circle of a spiral in $M$, or it is equal to the right circle of a spiral in $M$.
\end{remark}
It is therefore enough to describe only relation preserving maps (not necessarily surjective) between spirals. 
Before doing this precisely, let us see a {\emph{typical}}  example of a relation preserving map between spirals.

{\bf{Example.}} 
 Take $M=\{1,2,3,4,5,6\}$ with $x_M=3$ and $y_M=5$.
Take \\$N=\{1,2,3,4,5,6,7,8,9,10\}$ with $x_N=3$ and $y_N=7$.
The map $f\colon N\to M$ satisfying:
$f(1)=2$, $f(2)=3$, $f(3)=1$, $f(4)=2$, $f(5)=3$, $f(6)=4$, $f(7)=5$, $f(8)=6$, $f(9)=5$, and $f(10)=6$
is relation preserving.

In the proposition below we collect information about relation preserving maps between spirals.
\begin{proposition}\label{map}
Let $M=\{1,2,\ldots, m\}$ and $N=\{1,2,\ldots, n\}$ be spirals. Let $f\colon N\to M$  be a  relation preserving map.
Let $x$ be the left node of $M$ and let $y$ be the right node of $M$.

\begin{enumerate}
 \item Suppose that $f$ is onto $M$. Then, there are $a,b\in s_N$ such that $a<b$, $f(a)=x$, $f(b)=y$, and $b-a=|s_M|$
(there is exactly one such a pair $(a,b)$).

Conversely, suppose that $|s_M|\leq|s_N|$, $|l_M|$ divides $|l_N|$ and $|r_M|$ divides $|r_N|$.
Given $a,b\in s_N$ such that $a<b$ and $b-a=|s_M|$, then there is exactly one relation preserving $f\colon N\to M$  that is onto $M$, and such that
$f(a)=x$ and $f(b)=y$.\\ \label{a}
\item Given $f\colon N\to M$ that is onto the left circle of $M$,
then, there is $c\in l_N$ such that $f(c)=x$ (there is more than one such $c$).

Conversely, suppose that  $|l_M|$ divides $|l_N|$ and $|l_M|$ divides $|r_N|$.
Given $c\in l_N$, then there is exactly one relation preserving $f\colon N\to M$  that is onto the left circle of $M$ and satisfies
$f(c)=x$.\\ \label{b}
\item Given $f\colon N\to M$ that is onto the right circle of $M$,
then there is $d\in r_N$ such that $f(d)=y$ (there is more than one such $d$).

Conversely,  suppose that $|r_M|$ divides $|r_N|$ and $|r_M|$ divides $|l_N|$.
Given $d\in r_N$, then there is exactly one relation preserving $f\colon N\to M$  that is onto the right circle of $M$ and satisfies
$f(d)=y$. \label{c}
\end{enumerate}
\end{proposition}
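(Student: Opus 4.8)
The plan is to reduce the whole proposition to an analysis of directed walks in a single spiral. The basic observation, which I would record first, is the structure of $M$ (and, verbatim, of $N$) read as a directed graph via $R^M$: the left circle $l_M$ and the right circle $r_M$ are directed cycles of lengths $|l_M|$ and $|r_M|$, namely $1\to 2\to\cdots\to x_M\to 1$ and $y_M\to\cdots\to m\to y_M$; the middle line $s_M$ is a directed path $x_M\to x_M+1\to\cdots\to y_M$; the unique vertex of out-degree $2$ is $x_M$, with one out-edge into $l_M$ and one into $s_M$; and the unique vertex of in-degree $2$ is $y_M$. The decisive consequence is a \emph{trapping property}: a forward walk in $M$ can enter the interior of $s_M$ only by first passing through $x_M$, and once it leaves $s_M$ through $y_M$ into $r_M$ it remains in $r_M$ forever. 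In particular such a walk traverses $s_M$ at most once, and every closed walk in $M$ lies entirely inside $l_M$ or entirely inside $r_M$, hence has length a multiple of $|l_M|$, respectively $|r_M|$.

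For the forward implications I would proceed as follows. The images $f(l_N)$ and $f(r_N)$ of the directed cycles $l_N$ and $r_N$ are closed walks in $M$, hence each is contained in $l_M$ or in $r_M$. In case (1), $f$ onto $M$ forces $f(l_N)=l_M$ and $f(r_N)=r_M$ (if either lands in the wrong circle, the trapping property confines the whole image to a single circle), and since the interior of $s_M$ meets neither $l_M$ nor $r_M$ it must be covered by the walk obtained by restricting $f$ to $s_N$, which is a walk from $f(x_N)$ to $f(y_N)$. By the trapping property that walk traverses $s_M$ exactly once, along a subinterval $[a,b]\subseteq s_N$ on which $f$ restricts to the isomorphism onto $s_M$ with $f(a)=x$ and $f(b)=y$ (hence $b-a=|s_M|$); and $[a,b]$ is unique, because on $s_N$ the walk lies in $l_M$ before the index $a$, lies in $r_M$ after the index $b$, and can never return. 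In cases (2) and (3), $f$ onto the left (respectively right) circle of $M$ forces the restriction of $f$ to $l_N$ (respectively $r_N$) to be a cyclic cover of $l_M$ (respectively $r_M$), in particular surjective onto it, which yields a vertex $c\in l_N$ with $f(c)=x$ (respectively $d\in r_N$ with $f(d)=y$); unlike in (1) this vertex need not be unique, since a cyclic cover meets $x$ (respectively $y$) once in every revolution.

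For the converse implications I would build $f$ explicitly and then note that it is forced. In case (1): put $f$ equal to the isomorphism $a\mapsto x,\ldots,b\mapsto y$ from $[a,b]$ onto $s_M$, which makes sense because $[a,b]\subseteq s_N$ and $b-a=|s_M|$. Propagate $f$ backwards from $a$ — each value is forced, since every vertex of $M$ except $y_M$ has a unique predecessor — so that $[x_N,a]$ maps into $l_M$ and the cycle $l_N$ wraps around the cycle $l_M$; this closes up consistently precisely because $|l_M|$ divides $|l_N|$, and it automatically respects the special edge $x_N\to 1$. Symmetrically, propagate $f$ forwards from $b$ — each value is forced, since every vertex of $M$ except $x_M$ has a unique successor — so that $[b,y_N]$ maps into $r_M$ and the cycle $r_N$ wraps around $r_M$; this closes up consistently because $|r_M|$ divides $|r_N|$, and it respects the special edge $n\to y_N$. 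The three pieces agree at $a$ and at $b$, their union is a relation preserving map, and it is onto $M$ because its image contains $s_M$ (from $[a,b]$), all of $l_M$ (since $f(l_N)=l_M$), and all of $r_M$ (since $f(r_N)=r_M$). Uniqueness follows from the same two facts: any relation preserving $g$ onto $M$ with $g(a)=x$ and $g(b)=y$ must agree with the isomorphism onto $s_M$ on $[a,b]$ by the trapping property together with $b-a=|s_M|$, after which every remaining value is determined step by step. Cases (2) and (3) are this construction with the middle line inert: given $c\in l_N$, let $f$ on $l_N$ be the unique cyclic homomorphism onto $l_M$ with $f(c)=x$ (it exists since $|l_M|$ divides $|l_N|$), extend $f$ along $s_N$ into the cycle $l_M$ starting from the value $f(x_N)$ already fixed, and let $f$ on $r_N$ be the unique cyclic homomorphism onto $l_M$ with the value $f(y_N)$ already determined at $y_N$ (it exists since $|l_M|$ divides $|r_N|$); case (3) is the mirror image, with $r_M$ in place of $l_M$.

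The one place I expect genuine (though routine) difficulty is the bookkeeping in the converse constructions: verifying that the locally defined pieces of $f$ glue correctly at the shared nodes $x_N$ and $y_N$, and that the cyclic wrapping of $l_N$ around $l_M$ and of $r_N$ around $r_M$ closes up globally — this is exactly where the two special edges $x_N\to 1$, $n\to y_N$ and the divisibility hypotheses $|l_M|\mid|l_N|$, $|r_M|\mid|r_N|$ (and, for the crossing interval in (1) to exist at all, $|s_M|\le|s_N|$) get used. The conceptual core, the trapping property of forward walks in a spiral, is short once isolated, and it is what delivers simultaneously the uniqueness of the pair $(a,b)$ in part (1) and the uniqueness of $f$ in all three converse statements.
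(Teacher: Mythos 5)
Your proof is correct and follows essentially the same route as the paper: the converse directions are the paper's explicit ``wrap everything left of $a$ around $l_M$ and everything right of $b$ around $r_M$'' construction, recast as forced step-by-step propagation, and the forward directions (which the paper dismisses as immediate) are justified by your trapping/closed-walk observation. The only discrepancy is the harmless off-by-one between $b-a$ and $|s_M|$ counted as a number of elements, which is already present in the paper's own statement and formulas, so it is not a defect of your argument.
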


\begin{proof}
In each of \ref{a},\ref{b}, and \ref{c}
 the first statement is immediate, we just use that $f$ is relation preserving.

For the second statement in \ref{a}, we define $f$ in the following way:
$f(b+k)=y+(k \mbox{ mod }(m+1-y))$, for $k=0,1,\ldots, n-b$;
$f(a-k)=x-(k \mbox{ mod } x)$, for $k=0,1,\ldots, a-1$;
$f(k)=x+(k-a)$, for $a\leq k\leq b$.
(Intuitively, everything to the left of $a$ we wrap around the left circle of $M$, and 
everything to the right of $b$ we wrap around the right circle of $M$.)

For the second statement in \ref{b}, we define $f$ in the following way:
$f(c+k)= k \mbox{ mod } x$, for $k=0,1,\ldots, n-c$ (here we identify 0 with $x$);
$f(c-k)= x-(k \mbox{ mod } x)$, for $k=0,1,\ldots, c-1$.

For the second statement in \ref{c}, we define $f$ in the following way:
$f(d+k)=y+(k \mbox{ mod } (m+1-y))$, for $k=0,1,\ldots, n-d$;
$f(d-k)= (m+1)-(k \mbox{ mod } (m+1-y))$, for $k=0,1,\ldots, d-1$ (here we identify $m+1$ with $y$).

\end{proof}

{\bf{Joint projection property.}}
We check that $\mathcal{G}$ has the JPP.
First take two spirals $K$ and $L$. We want to find a spiral $N$ that can be mapped both onto $K$ and onto $L$.
For this, let $N$ be any spiral such that $|l_N|$ divides both $|l_L|$ and $|l_K|$,  $|r_N|$ divides both $|r_L|$ and $|r_K|$, and 
$|s_N|>|s_K|, |s_L|$.
We describe a relation preserving map from $N$ onto $K$: Choose $a,b\in s_N$ with $a<b$ and $b-a=|s_K|$; 
map $a$ to the left node of $K$, map $b$ to the right node of $K$, and extend this to the map on the whole $N$.
We similarly find a relation preserving map from $N$ onto $L$.

In general, when $K$ and $L$ are spiral structures, for every pair of spirals in $K$ and $L$ we find a spiral that can be mapped onto both of them.
The disjoint union of these spirals gives us the required spiral structure.

{\bf{ Amalgamation property.}}
We check that $\mathcal{G}$ has the AP.

The general situation and strategy: We have a spiral structure $K_1\cup\ldots \cup K_n$ (we have here a disjoint union of spirals),
an epimorphism $\phi_1\colon L_1\cup\ldots\cup L_{n_1}\to K_1\cup\ldots \cup K_n$,
and an epimorphism $\phi_2\colon M_1\cup\ldots\cup M_{n_2}\to K_1\cup\ldots \cup K_n$.
Take $L_i$, and consider $\phi_1\restriction L_i$. Its image is contained in some $K_j$. There are three possibilities: the image is equal to $K_j$, or it is equal to the left circle of $K_j$,
or it is equal to the right circle of $K_j$. 

For this fixed $L_i$, take any $M_k$ such that $\phi_2\restriction M_k$ is onto $K_j$.
We find a spiral $N$, a relation preserving map $\phi_3\colon N\to L_i$ that is onto, and a relation preserving map $\phi_4\colon N\to M_k$ (we just want $\phi_4$ to be into) such that $\phi_1\circ \phi_3=\phi_2\circ \phi_4$.
We do this with all of $L_1,L_2,\ldots, L_{n_1}$. Next, we proceed similarly with $M_1,M_2,\ldots, M_{n_2}$.

Therefore, it is enough to show the following:
\begin{proposition}
Let   $K,L,M$ be spirals.
Given a relation preserving map $f_1\colon L\to K$ and a relation preserving map  $f_2\colon M\to K$ that is  onto $K$,
then there exists a spiral $N$, a relation preserving map $f_3\colon N\to L$ that is onto $L$, and 
a relation preserving map $f_4\colon N\to M$
such that $f_1\circ f_3=f_2\circ f_4$.
\end{proposition}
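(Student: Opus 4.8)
The plan is to reduce the commutativity requirement to Proposition~\ref{map} and, crucially, to the uniqueness clauses it contains. By Proposition~\ref{map} there are three cases for $f_1$: it is onto $K$, onto the left circle of $K$, or onto the right circle of $K$; these are handled separately, the two circle cases being mirror images of each other. The rough idea in each case is that Proposition~\ref{map} shows a relation preserving onto map between spirals is pinned down by a small amount of discrete data (its distinguished pair $a<b$ with $f(a)=x$, $f(b)=y$, $b-a=|s|$ when it is onto the whole spiral; its value at one point when it is onto a circle), so the equation $f_1\circ f_3=f_2\circ f_4$ reduces to making that discrete data agree, which we arrange by choosing the parameters of $f_3$ and $f_4$.

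Suppose first that $f_1$ is onto $K$; write $x=x_K$, $y=y_K$. Proposition~\ref{map}(\ref{a}) gives the unique pair $a_2<b_2$ in $s_M$ with $f_2(a_2)=x$, $f_2(b_2)=y$, $b_2-a_2=|s_K|$, the unique pair $a_1<b_1$ in $s_L$ with $f_1(a_1)=x$, $f_1(b_1)=y$, $b_1-a_1=|s_K|$, and also that $|l_K|$ divides $|l_L|$ and $|l_M|$ while $|r_K|$ divides $|r_L|$ and $|r_M|$. Choose a spiral $N$ with $|l_N|$ a common multiple of $|l_L|,|l_M|$, with $|r_N|$ a common multiple of $|r_L|,|r_M|$, and with $|s_N|$ as large as needed below. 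Set $c=(a_2-x_M)-(a_1-x_L)$, pick $a_4\in s_N$ leaving ample room on both sides, and put $a_3=a_4+c$, $b_3=a_3+|s_L|$, $b_4=a_4+|s_M|$; for $|s_N|$ large enough, $a_3,b_3,a_4,b_4$ together with $a_3+(a_1-x_L)$ and $a_3+(b_1-x_L)$ all lie in $s_N$. By Proposition~\ref{map}(\ref{a}) and the divisibility just arranged, let $f_3\colon N\to L$ be the unique relation preserving map onto $L$ with $f_3(a_3)=x_L$, $f_3(b_3)=y_L$, and let $f_4\colon N\to M$ be the unique relation preserving map onto $M$ with $f_4(a_4)=x_M$, $f_4(b_4)=y_M$. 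Both $f_1\circ f_3$ and $f_2\circ f_4$ are then relation preserving maps from $N$ onto $K$. Since $f_3$ restricts to the order isomorphism $[a_3,b_3]\to s_L$ (so that $f_3$ sends $a_3+(a_1-x_L)$ to $a_1$ and $a_3+(b_1-x_L)$ to $b_1$), the distinguished pair of $f_1\circ f_3$ is $\big(a_3+(a_1-x_L),\,a_3+(b_1-x_L)\big)$, and likewise that of $f_2\circ f_4$ is $\big(a_4+(a_2-x_M),\,a_4+(b_2-x_M)\big)$; the choice $a_3=a_4+c$ makes these equal, so $f_1\circ f_3=f_2\circ f_4$ by the uniqueness clause of Proposition~\ref{map}(\ref{a}). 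As $f_3$ is onto $L$ and $f_4$ is relation preserving, this is the desired amalgamation.

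Now suppose $f_1$ is onto the left circle of $K$ (the right circle case is identical with $l$ and the left node replaced by $r$ and the right node, using Proposition~\ref{map}(\ref{c}) in place of~(\ref{b})). Then $|l_K|$ divides $|l_L|$ and $|r_L|$. Choose $N$ with $|l_N|,|r_N|$ a common multiple of $|l_L|,|r_L|,|l_M|,|r_M|$ and $|s_N|$ large, pick $a_3<b_3$ in $s_N$ with $b_3-a_3=|s_L|$, and let $f_3\colon N\to L$ be the corresponding map onto $L$ from Proposition~\ref{map}(\ref{a}). Then $g:=f_1\circ f_3$ is a relation preserving map from $N$ onto the left circle of $K$. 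For each $c_4\in l_N$, Proposition~\ref{map}(\ref{b}) gives a unique relation preserving $f_4\colon N\to M$ onto the left circle of $M$ with $f_4(c_4)=x_M$; since $f_2$ wraps $l_M$ onto $l_K$, the composite $f_2\circ f_4$ is onto the left circle of $K$. A relation preserving map from $N$ onto the cycle $l_K$ is determined by its value at any one point (this follows from Proposition~\ref{map}(\ref{b}) after composing with a rotation of $l_K$), so it suffices to choose $c_4$ with $(f_2\circ f_4)(c_0)=g(c_0)$ for one fixed $c_0\in l_N$; and as $c_4$ ranges over $l_N$ the value $(f_2\circ f_4)(c_0)$ ranges over all of $l_K$ (because $|l_K|$ divides $|l_M|$, which divides $|l_N|$), so such a $c_4$ exists. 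Then $f_2\circ f_4=g=f_1\circ f_3$, and again $f_3$ is onto $L$ and $f_4$ is relation preserving.

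The one place that needs care is the bookkeeping in the first case: choosing $a_3,a_4$ (via the offset $c$) so that the two distinguished pairs are forced to coincide, while keeping $a_3,b_3,a_4,b_4$ and the images of the distinguished pairs all inside the middle line $s_N$ — this is why $|s_N|$ is taken large. Beyond that, and the usual vigilance about the off-by-one conventions relating $|l|,|s|,|r|$ to the node positions, I do not expect a genuine obstacle; the entire argument is an application of the uniqueness statements in Proposition~\ref{map}.
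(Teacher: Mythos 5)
Your proof is correct and follows essentially the same route as the paper's: the same three-way case split on whether $f_1$ is onto $K$, $l_K$, or $r_K$, the same choice of $N$ (circles a common multiple, middle line long enough), and the same reliance on Proposition~\ref{map} to build $f_3$ and $f_4$ from a small amount of pinned-down data. The only difference is cosmetic --- you pin $f_3,f_4$ at offset points mapped to the nodes and then invoke the uniqueness clauses to force $f_1\circ f_3=f_2\circ f_4$, where the paper declares the values $f_3(a)=a_1$, $f_4(a)=a_2$ at a common pair and leaves that verification implicit --- and you correctly inherit (and flag) the paper's own off-by-one convention in $b-a=|s_K|$.
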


\begin{proof}
Let $x$ and $y$ denote the left and right nodes of $K$, respectively.
 We consider the following three cases.

{\bf{Case 1.}}
The map $f_1$ is onto $K$. Here we will get $f_4$ that is onto $M$.

Take any spiral $N$   such that $|l_N|$ divides both $|l_L|$ and $|l_M|$,  $|r_N|$ divides both $|r_L|$ and $|r_M|$, and 
$|s_N|>3(|s_M|+|s_L|)$.
Take $a_1,b_1\in s_L$ such that $a_1<b_1$, $b_1-a_1=|s_K|$, $f_1(a_1)=x$, and $f_1(b_1)=y$.
Take $a_2,b_2\in s_M$ such that $a_2<b_2$, $b_2-a_2=|s_K|$,  $f_2(a_2)=x$, and $f_2(b_2)=y$.
Choose $a,b\in s_N$ such that $a<b$ and $b-a=|s_K|$. Declare $f_3(a)=a_1$, $f_3(b)=b_1$,  $f_4(a)=a_2$, $f_4(b)=b_2$.
Extend $f_3$ and $f_4$ (in a unique way) to the whole $N$. We do this similarly as in the proof of Proposition \ref{map}  \ref{a}.
Above, we also have to make sure that our chosen $a$ and $b$ satisfy $a_1-x_L,a_2-x_M\leq a-x_N$ and
$y_L-b_1,y_M-b_2\leq y_N-b$.

{\bf{Case 2.}}
The map $f_1$ is onto $l_K$. Here we will get $f_4$ that is onto $l_M$.

Take any spiral $N$ such that   $|l_N|$ divides both $|l_L|$ and $|l_M|$,  $|r_N|$ divides both $|r_L|$ and $|l_M|$, and 
$|s_N|>|l_L|+|s_L|$.
Take $c_1\in l_L$ such that $f_1(c_1)=x$.
Take $c_2\in l_M$ such that $f_2(c_2)=x$.
Choose $c\in l_N$.
Declare $f_3(c)=c_1$ and $f_4(c)=c_2$. Extend $f_3$ (in a non unique way) to the whole $N$ so that $f_3$ is onto $L$.
Extend  $f_4$ (in a unique way) to the whole $N$ so that $f_4$ is onto $l_M$.

{\bf{Case 3.}}
The map $f_1$ is onto $r_K$. Here we will get $f_4$ that is onto $r_M$.

Here we proceed  as in Case 2.
\end{proof}

 Let $(\mathbb{L},R^\mathbb{L})$ denote the projective Fra\"{i}ss\'{e} limit of $\mathcal{G}$.

\begin{proposition}\label{cantor}
The underlying set $\mathbb{L}$ is (homeomorphic to) the Cantor set.
\end{proposition}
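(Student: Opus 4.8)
The plan is to verify the hypotheses of the classical characterization of the Cantor set: a nonempty, compact, metrizable, zero-dimensional, perfect space is homeomorphic to $2^\n$. By the definition of a topological $L$-structure in Section~\ref{sone}, $\mathbb{L}$ is already compact, second-countable, and zero-dimensional, hence compact, metrizable and zero-dimensional; and $\mathbb{L}\neq\emptyset$ since by projective universality (L1) it admits an epimorphism onto some (nonempty) $A\in\mathcal{G}$. So the only thing left to prove is that $\mathbb{L}$ has no isolated points, equivalently that every nonempty clopen $U\subseteq\mathbb{L}$ has at least two elements.

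Fix such a $U$. First I would apply (L2) to the continuous characteristic function $\chi_U\colon\mathbb{L}\to\{0,1\}$: this yields $A\in\mathcal{G}$, an epimorphism $\phi\colon\mathbb{L}\to A$, and $f_0\colon A\to\{0,1\}$ with $\chi_U=f_0\circ\phi$, so $U=\phi^{-1}(V)$ with $V:=f_0^{-1}(1)\neq\emptyset$. Pick $a\in V$ and let $M$ be the spiral component of the spiral structure $A$ that contains $a$. Now consider the spiral structure $B:=A\sqcup M$ obtained from $A$ by adding one more disjoint copy of the spiral $M$, and let $\phi_1\colon B\to A$ be the identity on the $A$-part of $B$ and the inclusion $M\hookrightarrow A$ on the extra copy. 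Then $\phi_1$ is an epimorphism: it is a relation-preserving continuous surjection, and the existential clause in the definition of epimorphism holds because $\phi_1$ restricts to an isomorphism on the copy of $A$ sitting inside $B$. Moreover the fiber $\phi_1^{-1}(\{a\})$ has exactly two points, namely the copy of $a$ in $A$ and the copy of $a$ in the extra spiral.

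Next I would invoke the extension property (Proposition~\ref{fraisse}(1)) for $\phi_1\colon B\to A$ and $\phi\colon\mathbb{L}\to A$: there is an epimorphism $\psi\colon\mathbb{L}\to B$ with $\phi=\phi_1\circ\psi$. Since $\phi^{-1}(\{a\})=\psi^{-1}\!\left(\phi_1^{-1}(\{a\})\right)$, and $\psi$ is onto while $\phi_1^{-1}(\{a\})$ has two points, $\phi^{-1}(\{a\})$ has at least two points; and $\phi^{-1}(\{a\})\subseteq\phi^{-1}(V)=U$ because $a\in V$. Hence $|U|\geq 2$. Therefore $\mathbb{L}$ is perfect, and Brouwer's characterization theorem gives that $\mathbb{L}$ is homeomorphic to the Cantor set.

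I do not expect any real obstacle: the argument is entirely soft. The only point requiring a moment's care is checking that the ``doubling'' structure $B=A\sqcup M$ with the obvious map is a genuine epimorphism in the sense of Section~\ref{sone} (the nontrivial direction of the relation clause is immediate, as $\phi_1$ is an isomorphism on one copy of $A$) and that $B$ indeed lies in $\mathcal{G}$, which it does since a disjoint union of spiral structures is a spiral structure. One could alternatively bypass (L2) and the extension property and argue directly from the inverse-limit presentation of Proposition~\ref{inverse}: the same doubling trick, fed into clause (2) there, shows that for every $i$ and every $a$ in the image of $\mathbb{L}\to D_i$ there is $j>i$ with $\left|(\pi^j_i)^{-1}(\{a\})\right|\geq 2$, which is exactly the statement that $\mathbb{L}$ has no isolated point; but the formulation above is the shortest.
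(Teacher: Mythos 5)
Your proof is correct and follows essentially the same route as the paper: verify compactness, metrizability, and zero-dimensionality from the definition of a topological $L$-structure, then rule out isolated points by applying (L2) to get an epimorphism $\phi\colon\mathbb{L}\to A$ separating the relevant point(s), building a structure $B$ over $A$ whose fiber over the relevant point has two elements, and invoking the extension property. The only (immaterial) difference is that the paper doubles all of $A$ rather than just the spiral component containing $a$.
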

\begin{proof}
 The underlying set $\mathbb{L}$  is compact, zero-dimensional, and second-countable, as $(\mathbb{L},R^\mathbb{L})$ is a topological $L$-structure (where $L=\{R\}$).
We  show that $\mathbb{L}$ has no isolated points as follows. Suppose, towards a contradiction, that $p\in \mathbb{L}$ is an isolated point. 
Using (L2) find $A\in\f$ and an epimorphism $\phi\colon \mathbb{L}\to A$ such that the open cover $\{\{p\},\mathbb{L}\setminus\{p\}\}$
is refined by $\{\phi^{-1}(a)\colon a\in A\}$.
Set $a_0=\phi(p)$. We can find $B$ and $\bar{\phi}\colon B\to A$ such that
there are distinct $b_0,b_1$ with $\bar{\phi}(b_0)=\bar{\phi}(b_1)=a_0$
(for example, take $B$ equal to two disjoint copies of $A$, and require $\bar{\phi}$ restricted to each copy to be the identity).
 Using the extension property, find $\psi\colon \mathbb{L}\to B$ such that
$\phi=\bar{\phi}\circ \psi$. Note that $\bar{\phi}^{-1}(b_0)$ and $\bar{\phi}^{-1}(b_1)$ are disjoint non-empty clopen subsets of $\{p\}$.
This gives a contradiction.
\end{proof}

\begin{proposition}\label{bijekcja}
The closed relation $R^\mathbb{L}$ is the graph of a homeomorphism of the Cantor  set.

\end{proposition}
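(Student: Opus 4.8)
The plan is to show that $R^{\mathbb{L}}$, viewed as a closed subset of $\mathbb{L}\times\mathbb{L}$, is the graph of a homeomorphism; since $\mathbb{L}$ is compact Hausdorff it suffices to prove that $R^{\mathbb{L}}$ is the graph of a bijection $f\colon\mathbb{L}\to\mathbb{L}$ (continuity of $f$ and of $f^{-1}$ is then automatic from closedness of the graph and compactness). Concretely, I must verify four things: (i) for every $p\in\mathbb{L}$ there is at most one $q$ with $R^{\mathbb{L}}(p,q)$; (ii) for every $p$ there is at least one such $q$; (iii) for every $q$ there is at most one $p$ with $R^{\mathbb{L}}(p,q)$; and (iv) for every $q$ there is at least one such $p$. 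Each of these is a ``local'' statement that should be forced by a suitable finite approximation together with the extension property (Proposition \ref{fraisse}(1)) and property (L2), exactly in the spirit of the proof of Proposition \ref{cantor}.

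For the ``at most one'' statements (i) and (iii), I would argue by contradiction. Suppose $R^{\mathbb{L}}(p,q_0)$ and $R^{\mathbb{L}}(p,q_1)$ with $q_0\neq q_1$. Using (L2) choose an epimorphism $\phi\colon\mathbb{L}\to A$ with $A\in\mathcal{G}$ separating $q_0$ from $q_1$, i.e.\ $\phi(q_0)\neq\phi(q_1)$, and also separating $p$ from each $q_i$ if needed. Set $a=\phi(p)$. Because $\phi$ is an epimorphism for the relation $R$, from $R^{\mathbb{L}}(p,q_i)$ we get $R^{A}(\phi(p),\phi(q_i))$, so $a$ has two distinct $R^{A}$-successors $\phi(q_0),\phi(q_1)$ in the spiral structure $A$. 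But in any spiral $N$ the relation $R^N$ gives each point exactly one successor: $R^N(i,i{+}1)$ for $i<n$, $R^N(x_N,1)$, $R^N(n,y_N)$ — note the node points $x_N$ and $y_N$ are \emph{not} given outgoing edges of the cycling type, only $1$ is the target of $x_N$ and $y_N$ is the target of $n$; tracing the definition, every vertex has exactly one out-neighbour (and exactly one in-neighbour). This contradicts two distinct successors of $a$, proving (i); statement (iii) is symmetric, using that every vertex of a spiral has exactly one in-neighbour.

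For the ``at least one'' statements (ii) and (iv), I would again use the extension property. Fix $p\in\mathbb{L}$. Given any epimorphism $\phi\colon\mathbb{L}\to A$, the element $a=\phi(p)$ has a unique successor $a'$ in $A$; by surjectivity of $\phi$ and the epimorphism condition for $R$, there are $p',q'\in\mathbb{L}$ with $\phi(p')=a$, $\phi(q')=a'$ and $R^{\mathbb{L}}(p',q')$ — but this need not give a successor of $p$ itself. To fix $p$, take a decreasing sequence of epimorphisms $\phi_n\colon\mathbb{L}\to A_n$ whose fibres shrink to points (available from the inverse-limit description, Proposition \ref{inverse}, since $\mathbb{L}=\varprojlim D_i$); for each $n$ pick $q_n$ with $R^{\mathbb{L}}(p_n,q_n)$ and $\phi_n(p_n)=\phi_n(p)$, so $p_n\to p$, and by compactness pass to a convergent subsequence $q_{n_k}\to q$; since $R^{\mathbb{L}}$ is closed and $(p_{n_k},q_{n_k})\to(p,q)$ we get $R^{\mathbb{L}}(p,q)$. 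Statement (iv) is symmetric. I expect the main obstacle to be this passage from ``$R$ is locally surjective at the level of each $A$'' to ``$R$ is total as a relation on $\mathbb{L}$'': the epimorphism axiom for $R$ only guarantees preimages of related pairs exist \emph{somewhere}, not over a prescribed point, so the compactness/limit argument above (or, alternatively, a direct extension-property construction producing a spiral $B$ over $A_n$ that pins down a successor of the fibre of $p$) is where care is needed. Everything else reduces to the elementary combinatorics of spirals already recorded in Proposition \ref{map}.
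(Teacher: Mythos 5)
There is a genuine error at the heart of your argument for the ``at most one'' statements. You claim that in a spiral $N$ every vertex has exactly one out-neighbour and exactly one in-neighbour. This is false: by definition $R^N(i,i+1)$ holds for \emph{every} $i=1,\dots,n-1$, including $i=x_N$, and in addition $R^N(x_N,1)$; so the left node $x_N$ has two successors, namely $1$ and $x_N+1$. Symmetrically, the right node $y_N$ has two predecessors, $y_N-1$ and $n$. (This is exactly what makes a spiral a surjective relation that is not a bijection --- it records the wrapping of a homeomorphism around an eventually periodic cycle of clopen sets.) Consequently, if $\psi_1(\alpha)$ happens to be a left node of a spiral in $A$, the two distinct images $\psi_1(\beta_1)\neq\psi_1(\beta_2)$ can perfectly well both be $R^A$-successors of $\psi_1(\alpha)$, and no contradiction arises at the level of $A$. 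The paper's proof inserts precisely the step you are missing: using the description of epimorphisms in Proposition \ref{map}, it produces a refinement $\phi\colon B\to A$ in $\mathcal{G}$ with the property that every point of $B$ lying over $\psi_1(\alpha)$ has exactly one $R^B$-successor (one arranges that no preimage of $\psi_1(\alpha)$ is a left node of a spiral of $B$), then uses the extension property to factor $\psi_1=\phi\circ\psi_2$ and derives the contradiction inside $B$, not inside $A$. Without this refinement the single-valuedness argument does not go through.

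Your treatment of the ``at least one'' statements via the inverse-limit description and a compactness argument is correct (the paper leaves this part implicit, asserting only that the limit relations are surjective), and the final step --- a closed graph of a bijection of a compact Hausdorff space is the graph of a homeomorphism --- matches the paper. But the core of the proposition is single-valuedness, and there your argument fails as written because it rests on a misreading of the spiral relation.
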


\begin{proof}
Suppose, towards a contradiction, that there are $\alpha,\beta_1,\beta_2\in \mathbb{L}$, $\beta_1\neq \beta_2$, such that $R^\mathbb{L}(\alpha,\beta_1)$ and $R^\mathbb{L}(\alpha,\beta_2)$.
Take $A\in\f$ and $\psi_1\colon \mathbb{L}\to A$ such that $\psi_1(\beta_1)\neq \psi_1(\beta_2)$.
Using the description of epimorphisms between spirals (Proposition \ref{map}) we observe that there are
 $B\in\f$ and $\phi\colon B\to A$ such that whenever $x$ is such that $\phi(x)=\psi_1(\alpha)$, then there is exactly one $y\in B$ such that $R^B(x,y)$.
Using the extension property find $\psi_2\colon \mathbb{L}\to B$ such that $\psi_1=\phi\circ \psi_2$.
We have $R^B(\psi_2(\alpha),\psi_2(\beta_1))$ and $R^B(\psi_2(\alpha),\psi_2(\beta_2))$. 
By the choice of $\phi$,  we get $\psi_2(\beta_1)=\psi_2(\beta_2)$, and therefore $\psi_1(\beta_1)=\psi_1(\beta_2)$.
This gives a contradiction.

We similarly  show that 
there are no $\alpha,\beta_1,\beta_2\in \mathbb{L}$, $\beta_1\neq \beta_2$, such that $R^\mathbb{L}(\beta_1,\alpha)$ and $R^\mathbb{L}(\beta_2,\alpha)$.

As $(\mathbb{L},R^\mathbb{L})$ is a  topological $L$-structure, $R^\mathbb{L}$ is closed and $\mathbb{L}$ is compact, and therefore, the function induced by 
$R^\mathbb{L}$, and its inverse, preserve the topology.
\end{proof}

Denote by $F^\mathbb{L}$ the function induced by $R^\mathbb{L}$. Below, we will be writing
  $(\mathbb{L},F^\mathbb{L})$ rather than $(\mathbb{L},R^\mathbb{L})$.

\begin{proposition}
 The conjugacy class of  $(\mathbb{L},F^\mathbb{L})$ is a dense $G_\delta$ in $H(\mathbb{L})= H(2^{\n})$.
\end{proposition}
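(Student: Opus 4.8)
The plan is to establish two things separately: that the conjugacy class of $(\mathbb{L}, F^\mathbb{L})$ in $H(2^\n)$ is dense, and that it is a $G_\delta$. For density, I would use the Rokhlin property of $H(2^\n)$ established by Akin--Hurley--Kennedy and Glasner--Weiss (cited in the introduction): it suffices to show that the conjugacy class of $(\mathbb{L}, F^\mathbb{L})$ is \emph{nonempty} as a subset of $H(2^\n)$ and contains an element whose conjugacy class is dense — but more directly, I would show that every basic open neighborhood in $H(2^\n)$ meets the class. A cleaner route is the following. Recall that two homeomorphisms $f,g$ of the Cantor set are conjugate iff for every finite clopen partition $P$ refining suitable combinatorial data, the induced ``spiral'' combinatorics agree; and a basic open set in $H(2^\n)$ is determined by specifying the action on a finite clopen partition, i.e.\ by a partial combinatorial constraint realizable inside some spiral structure $A \in \mathcal{G}$. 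By projective universality (L1), there is an epimorphism $\phi \colon \mathbb{L} \to A$, and reading $\phi$ as a clopen partition of $\mathbb{L}$ shows that $F^\mathbb{L}$ moves the pieces of this partition exactly according to the relation $R^A$ of $A$; since $A$ was an arbitrary spiral structure realizing the given constraint, $(\mathbb{L}, F^\mathbb{L})$ lies in the prescribed open set. Hence the conjugacy class is dense.

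For the $G_\delta$ part, I would appeal to the general principle that a conjugacy class in a Polish group is $G_\delta$ as soon as it is non-meager, together with density — but it is cleaner to give the direct argument. The conjugacy class of $(\mathbb{L}, F^\mathbb{L})$ consists exactly of those $f \in H(2^\n)$ such that for every $A \in \mathcal{G}$ there is an epimorphism $(2^\n, f) \to A$ (projective universality for $f$) and such that $f$ satisfies the relevant extension property with respect to the family $\mathcal{G}$. By Proposition \ref{fraisse}(\ref{uep}), these two conditions — projective universality and the extension property, together with (L2) which is automatic for the Cantor set with any clopen-partition cofinal system — characterize the projective Fra\"{i}ss\'{e} limit up to isomorphism, and an isomorphism of topological $\{R\}$-structures between $(2^\n, f)$ and $(\mathbb{L}, F^\mathbb{L})$ is precisely a conjugacy in $H(2^\n)$. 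So I would write the class as
\[
\{ f \in H(2^\n) : \forall A \in \mathcal{G}\ \exists \text{ epi } (2^\n,f) \to A \} \cap \{ f : f \text{ has the extension property for } \mathcal{G} \}.
\]
Each of these is a countable intersection of open conditions: ``$\exists$ an epimorphism onto $A$'' is open in $f$ because an epimorphism only depends on $f$ through its action on a finite clopen partition, and it persists under small perturbations of $f$; the extension property, after fixing the countably many pairs $\phi_1 \colon B \to A$ with $A, B \in \mathcal{G}$ and fixing a countable basis of epimorphisms $\phi_2$, likewise unwinds to a countable intersection of open sets. Therefore the conjugacy class is $G_\delta$.

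The main obstacle I expect is the bookkeeping in the density argument and in verifying that ``having an epimorphism onto $A$'' and ``having the extension property'' are genuinely open conditions on $f$: one must be careful that an epimorphism $(2^\n, f) \to A$ is witnessed by a continuous (equivalently, finite clopen partition) map, and that the defining conditions on such a map — that it respects the relation and satisfies the back-and-forth clause for $R$ — are preserved when $f$ is replaced by any $f'$ agreeing with $f$ on a sufficiently fine clopen partition. This is where the concrete description of epimorphisms between spirals (Proposition \ref{map}) and the remark preceding it are used: they guarantee that the combinatorial type of $f$ with respect to a clopen partition determines, and is determined by, a spiral structure, so the relevant sets are clopen in the partition-type and hence open in $H(2^\n)$. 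Once these openness facts are in hand, the rest is a routine application of Proposition \ref{fraisse}(\ref{uep}) and the Baire category theorem is not even needed, since we have produced an explicit $G_\delta$ presentation together with density.
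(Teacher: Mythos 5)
Your overall strategy coincides with the paper's: the paper proves this proposition simply by pointing to Propositions \ref{dense} and \ref{gdelta} (the $m$-tuple versions), and your two halves are those arguments specialized to $m=1$ and the family $\mathcal{G}$. The $G_\delta$ half is essentially right: writing the conjugacy class as the set of $f$ for which $(2^\n,f)$ satisfies projective universality and the extension property, invoking Proposition \ref{fraisse}(ii), and checking that each of the countably many defining conditions is open via the clopen sets of the form $H(g,A)$ is precisely what the paper does in Proposition \ref{gdelta}.

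The density half has a genuine gap, plus some clutter. The clutter: the opening appeal to the Rokhlin property is circular and then abandoned, and the assertion that two homeomorphisms of $2^\n$ are conjugate iff their induced combinatorics agree on every partition is false (though unused). The gap: a basic open set has the form $[P,s^P]$ for an \emph{arbitrary} surjective relation $s^P$ on an \emph{arbitrary} clopen partition $P$ of $2^\n$ (Lemma \ref{basis}); $(P,s^P)$ need not itself be a spiral structure, so you must first invoke the coinitiality of $\mathcal{G}$ in $\mathcal{G}_0$ (Lemma \ref{init}) to get a spiral structure $A$ mapping onto $(P,s^P)$ and then compose with the epimorphism $\mathbb{L}\to A$ given by (L1). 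More importantly, what this produces is a partition $Q$ of $\mathbb{L}$ with $(Q,F^\mathbb{L}\restriction Q)$ isomorphic to $(P,s^P)$; it does \emph{not} put $F^\mathbb{L}$ itself in $[P,s^P]$, and your literal conclusion that ``$(\mathbb{L},F^\mathbb{L})$ lies in the prescribed open set'' cannot be right, since no single homeomorphism lies in every basic open set. The missing step is to extend the finite bijection $P\to Q$ to a homeomorphism $g$ of the Cantor set and observe that $g^{-1}F^\mathbb{L}g\in[P,s^P]$; this conjugation is exactly where density of the \emph{conjugacy class} (as opposed to the false statement that $F^\mathbb{L}$ meets every open set) comes from, and it is the step the paper writes out explicitly at the end of the proof of Proposition \ref{dense}.
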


\begin{proof}
The proof goes along the lines of proofs of Propositions \ref{dense} and \ref{gdelta}, presented in the next section.
\end{proof}

It is natural to ask whether we can get a generic homeomorphism as a limit of a family of finite sets equipped with just a bijection. In the example below we show
that this is not the case.
Nevertheless, we get a homeomorphism  with a $G_\delta$ conjugacy class.
{\bf{Example.}}
 Let $L=\{F\}$, where $F$ is an unary functional symbol.
Consider
\[
 \f=\{(A,F^A)\colon A \mbox{ is finite }, F^A \mbox{ is a bijection} \}.
\]
This is a projective Fra\"{i}ss\'{e} family. We check JPP and AP.

JPP: Take $(A,F^A),(B,F^B)\in\f$. Then $(A\times B,F^A\times F^B)$ together with projections works.

AP: Take $(A,F^A),(B,F^B),(C,F^C)\in\f$, $\phi_1\colon (B,F^B)\to (A,F^A)$, and $\phi_2\colon (C,F^C)\to (A,F^A)$. Then $(D, F^D)$, where
\[
 D=\{(b,c)\in B\times C\colon \phi_1(b)=\phi_2(c)\}
\]
and $F^D=F^B\times F^C$, together with projections works.

Denote the limit by $(\mathbb{L}, F^\mathbb{L})$. 
Similarly, as in the proof of Proposition \ref{cantor}, we can show that $\mathbb{L}$ is (homeomorphic to) the Cantor set.
Since for every $(A,F^A)\in\f$, $F^A$ is a bijection, it follows that $F^\mathbb{L}$ is a homeomorphism.
The conjugacy class of  $F^\mathbb{L}$ is a $G_\delta$ in $H(\mathbb{L})$. The proof of this goes along the lines of the proof of Proposition \ref{gdelta} presented
in the next section.

{\bf{Claim.}}
 The conjugacy class of $ F^\mathbb{L}$ is not dense in $H(\mathbb{L})$.

\begin{proof}
 Suppose, towards a contradiction, that the conjugacy class of $ F^\mathbb{L}$ is dense in $H(\mathbb{L})$.
For a partition $P=\{p,q\}$ of $\mathbb{L}$ into non-empty clopen sets let
\begin{equation*}
 \begin{split}
  U_P=\{f\in H(\mathbb{L})\colon f(p)\cap p\neq\emptyset,\ f(q)\cap p\neq\emptyset, f(q)\cap q\neq\emptyset, f(p)\cap q=\emptyset\}.
 \end{split}
\end{equation*}
Suppose now that for some $g\in H(\mathbb{L})$
and a partition $P$, $g^{-1}F^\mathbb{L}g\in U_P$. Then $F^\mathbb{L}\in U_{P'}$, where $P'=\{g(p),g(q)\}$.
Using (L2) in the properties of the projective  Fra\"{i}ss\'{e} limit 
(applied to a discrete two-element space $X=\{x,y\}$ and to $f\colon \mathbb{L}\to X$ such that $f^{-1}(x)=g(p)$ and $f^{-1}(y)=g(q)$), it is not difficult to
show that   this is impossible.
\end{proof}

\begin{remark}
 In fact, one can show (for example, by checking conditions (L1), (L2), and (L3) in the definition of the projective  Fra\"{i}ss\'{e} limit) that the limit $(\mathbb{L}, F^\mathbb{L})$
in the example above is isomorphic to $(\Theta\times 2^\n, \tau\times \mbox{id})$, where $(\Theta, \tau)$ is the universal adding machine. 
The universal adding machine is the inverse limit of the inverse system $(\mathbb{Z}_{n!}, p^{n+1}_n)_n$, where $\mathbb{Z}_{n!}$ is the ring of 
integers modulo $n!$, $p^{n+1}_n(k) = k \mbox{ mod } n!$, and $\tau$ is the coordinatewise translation
by the identity element. 
\end{remark}

\section{$H(2^\n)$ has ample generics}\label{sthree}

Let $s$ be a symbol for a binary relation. Following \cite{AHK} (Chapter 8) 
we say that $s^A$ is a surjective relation on a  set $A$ if $s^A\subseteq A^2$ and for any $a\in A$ there are $b,c\in A$
such that $s^A(a,b)$ and $s^A(c,a)$. Note that $s^A$ is a directed graph with an additional surjectivity property.

Surjective relations come up naturally as restrictions of homeomorphisms of the Cantor set  to clopen partitions of the Cantor set.
If $P$ is a clopen partition of $2^\n$ and $f\in H(2^\n)$, then  
$\{(p,q)\in  P^2\colon f(p)\cap q\neq\emptyset\}$
is a surjective relation. 
We can think of a surjective relation as a partial homeomorphism of the Cantor set.
Note also that spiral structures considered in the previous section are surjective relations.

To get a generic $m$-tuple of homeomorphisms, we will consider a certain family $\f$ of $m$-tuples of surjective relations (Theorem \ref{main}).
After taking the limit, we  obtain an $m$-tuple of closed relations on the Cantor set, which are  surjective (that is, projections on both coordinates are onto).
We  show that every relation in this tuple is necessarily a permutation (Proposition \ref{bijekcjaa}), and therefore,  is the  graph of a homeomorphism of the Cantor set.
Finally, we show that this $m$-tuple of homeomorphisms is generic.


Let $L=\{s_1,s_2,\ldots, s_m\}$, where $s_1,s_2,\ldots, s_m$ are symbols for binary relations.
Let
\[
\mathcal{F}_0=\{(A,s^A_1,\ldots,s_m^A)\colon A \mbox{ is a finite non-empty set}, \\
s^A_1,\ldots,s_m^A \mbox{ are surjective relations } \}.
\]

It is straightforward to show that $\mathcal{F}_0$ has the JPP. 
Take $(A,s^A_1,\ldots,s_m^A),(B,s_1^B,\ldots,s_m^B)$ in $\mathcal{F}_0$. Then $(A\times B,s_1^A\times s_1^B,\ldots,s_m^A\times s_m^B)$ together with projections as epimorphisms works.

We want to find a \emph{coinitial} subfamily $\f$ of $\f_0$
(that is, such that for every $A\in \f_0$ there is $B\in\f$ and an epimorphism $\phi\colon B\to A$), which is a projective  Fra\"{i}ss\'{e} family. 
From the coinitiality of $\f_0$ it will follow that $\f$ has the JPP as well. The main difficulty is to take care of the AP.

We start with some notation. 
Let $s_1^{-1},s_2^{-1},\ldots,s_m^{-1}$ be symbols for the inverses of $s_1,s_2,\ldots,s_m$.
For $R$ equal to $s_1,s_1^{-1},\ldots,s_m,s_m^{-1}$, $R^{-1}$ denotes $s_1^{-1}, s_1,\ldots,s_m^{-1}, s_m$, respectively.
Given $A=(A,s^A_1,\ldots,s_m^A)$, then  $(s_1^{-1})^A,\ldots,(s_m^{-1})^A$ are surjective relations too.
Let $R$ be one of $s_1,s_1^{-1},\ldots,s_m,s_m^{-1}$. Given $x\in A$,
we say that $x$ is  $R^A$-\emph{outgoing} if there is more than one $z\in A$ with $R^A(x,z)$, and there is exactly one $y\in A$ with $R^A(y,x)$.
 We say that $x$ is $R^A$-\emph{incoming} if there is more than one $y\in A$ with $R^A(y,x)$, and there is exactly one $z\in A$ with $R^A(x,z)$.
Note that $x$ is $R^A$-outgoing iff it is  $(R^{-1})^A$-incoming.

For $A\in\mathcal{F}_0$ we say that we can \emph{amalgamate over} $A$ if for any $B,C\in\mathcal{F}_0$, $\phi_1\colon B\to A$, and $\phi_2\colon C\to A$ there are $D\in\mathcal{F}_0$,
 $\phi_3\colon D\to B$, and
 $\phi_4\colon D\to C$ such that $\phi_1\circ \phi_3=\phi_2\circ \phi_4$.

Let $\f$  be the collection of all structures from $\f_0$ that satisfy (i) and (ii) of Theorem \ref{main} below. From the coinitiality of $\f$ in $\f_0$ (Theorem \ref{cap} below)
and Theorem \ref{main} it will follow that $\f$ is a  projective  Fra\"{i}ss\'{e} family.
\begin{theorem}\label{main}
 Given $A=(A,s^A_1,\ldots,s_m^A)$, suppose that $A$ satisfies the following conditions.
\begin{enumerate}
\item Every point in $A$ is outgoing  for exactly one of $s_1^A,(s_1^{-1})^A,\ldots,s_m^A,(s_m^{-1})^A$. \label{one}
\item  Let $R$ be one of   $s_1,s_2,\ldots,s_m$. Suppose that $R^A(x,y)$. Then either $x$ is $R^A$-outgoing 
 or $y$ is $R^A$-incoming. \label{two}
\end{enumerate}
Then we can amalgamate over $A$.
\end{theorem}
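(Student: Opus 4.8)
The plan is to realise the amalgam $D$ as (a substructure of) the fibered product of $B$ and $C$ over $A$. Given $\phi_1\colon B\to A$ and $\phi_2\colon C\to A$, set
\[
D_0=\{(b,c)\in B\times C\colon \phi_1(b)=\phi_2(c)\},\qquad s_i^{D_0}=\{((b,c),(b',c'))\colon s_i^B(b,b')\text{ and }s_i^C(c,c')\},
\]
and let $\phi_3,\phi_4$ be the two coordinate projections. Then $\phi_1\circ\phi_3=\phi_2\circ\phi_4$ by construction; since $\phi_1$ and $\phi_2$ are onto, so are $\phi_3$ and $\phi_4$; and a short diagram chase — using that $\phi_1,\phi_2$ are \emph{epimorphisms}, so an $s_i$-edge of $A$ lifted through one factor can be re-lifted through the other — shows that, as soon as the coordinatewise relations are surjective (so that $D_0\in\mathcal F_0$), the projections $\phi_3,\phi_4$ are automatically epimorphisms. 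So the whole content of the theorem is to pass from $D_0$ to a substructure $D\in\mathcal F_0$ on which the relations are surjective, without destroying surjectivity of $\phi_3$ and $\phi_4$.

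The relations $s_i^{D_0}$ can fail to be surjective: a pair $(b,c)$ lying over $a=\phi_1(b)=\phi_2(c)$ may lack an $s_i$-out-edge because the $s_i^A$-out-neighbours of $a$ ``seen'' by $b$ and those seen by $c$ are disjoint subsets of the out-neighbourhood of $a$ (and dually for in-edges). Note first that this cannot happen when $a$ has $s_i^A$-out-degree and in-degree both $1$ — there the coordinatewise edges match up uniquely — so the failures occur only over points of $A$ that are branch points for $s_i$. Here is where condition \ref{two} is used: together with \ref{one} it forces a very rigid local shape on each $s_i^A$. Every $s_i^A$-edge has a ``clean'' endpoint — an outgoing source or an incoming sink — and hence no $s_i^A$-edge enters the out-branching region from outside it and no $s_i^A$-edge leaves the in-branching region; consequently each of these regions is a union of directed cycles with trees attached (along, resp. against, the edge direction). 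This cycle-with-trees picture is exactly what makes the repair possible: a pair $(b,c)$ over a branch point can be given its missing $s_i$-edges by replacing it with several copies and re-routing the edges inside the tree, rather than by finding a single step matching in both coordinates at once.

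Condition \ref{one} then supplies the organising principle that lets all these local repairs be carried out coherently: since every point of $A$ is outgoing for exactly one of $s_1^A,(s_1^{-1})^A,\dots,s_m^A,(s_m^{-1})^A$, the branching of the different relations is tightly controlled, and the re-routing performed for $s_i$ does not interfere with the structure of $s_j$. Concretely I would proceed in one of two equivalent ways. Either (a) first replace $B$ and $C$ by epimorphic preimages $B'\to B$ and $C'\to C$ in which the fibres over the branching trees of $A$ are ``uniform'' — this is legitimate for amalgamation, since an amalgam of $B'\leftarrow A\rightarrow C'$ becomes, after composing with $B'\to B$ and $C'\to C$, an amalgam of $B\leftarrow A\rightarrow C$ — and then take $D=B'\times_A C'$, whose coordinatewise relations are now surjective; or (b) keep $B,C$ and let $D$ be the substructure of $D_0$ obtained by repeatedly discarding every vertex that currently lacks an $s_i$-out- or $s_i$-in-edge, for some $i$, until the process stabilises. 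In either case one obtains $D\in\mathcal F_0$ with epimorphisms $\phi_3\colon D\to B$ and $\phi_4\colon D\to C$ satisfying $\phi_1\circ\phi_3=\phi_2\circ\phi_4$, which is what it means to amalgamate over $A$.

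The step I expect to be the main obstacle is verifying that the repaired/pruned $D$ still projects \emph{onto} $B$ and onto $C$ — that is, that the nicification in (a), or the pruning in (b), never removes all the preimages of a given vertex. This is the genuinely combinatorial part of the argument, and it is precisely here that conditions \ref{one} and \ref{two} must both be invoked: condition \ref{two} to know that the branching regions of each $s_i^A$ are the cycles-with-trees described above (so that a missing edge at $(b,c)$ can always be recovered somewhere in the fibre), and condition \ref{one} to ensure that the recoveries for the various relations can be made simultaneously without conflict. Once surjectivity of the relations on $D$ and surjectivity of the two projections are checked, the epimorphism property of $\phi_3,\phi_4$ follows from the diagram chase of the first paragraph, and the theorem is proved.
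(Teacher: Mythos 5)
Your construction is the same as the paper's: form the fibered product $D_0=\{(b,c)\colon \phi_1(b)=\phi_2(c)\}$ with coordinatewise relations, and then (your option (b)) repeatedly discard vertices missing an $s_i$- or $s_i^{-1}$-successor until the process stabilizes at some $D$ on which all relations are surjective. You also correctly locate the difficulty: one must show that this pruning never empties the fibre over a vertex of $B$ or of $C$. But at exactly that point the proposal stops. You assert that conditions (1) and (2) ``must be invoked'' there and offer a heuristic picture (branch points, cycles with trees attached, re-routing copies of vertices), but no argument is given that any particular $(b,c)$ survives all stages of the pruning. This verification is the entire content of the theorem: in the paper it occupies three lemmas, built around auxiliary sets $E_n^x$ which record that a pair over $x$ keeps a successor in the \emph{one} direction $R$ for which $x$ is $R^A$-outgoing (this is where condition (1) enters), a lemma showing $E_n=D_n$ (where condition (2) enters, via the dichotomy ``$x$ outgoing or $y$ incoming'' for each edge), and an intertwined induction proving simultaneously that $\pi_1[E_n]=B$ and that every $s_i^B$-edge lifts to an $s_i$-edge of $E_n$. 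None of this is reconstructed or replaced in your proposal, so the proof is missing its core. (Your alternative route (a), uniformizing the fibres of $B$ and $C$ first, is likewise only named, not carried out.)

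There is a second, smaller error in the final paragraph. You claim that once the relations on $D$ are surjective and the projections $\pi_1\colon D\to B$, $\pi_2\colon D\to C$ are onto, the epimorphism property follows from the diagram chase of your first paragraph. That chase is valid only for $D_0$: given an edge $(b,b')\in s_i^B$ it produces a lift $((b,c),(b',c'))$ lying in $D_0$, with no guarantee that $(b,c)$ and $(b',c')$ survive the pruning. Surjectivity of $\pi_1$ on $D$ plus surjectivity of the relations on $D$ gives you, for each $b$, \emph{some} outgoing edge of $D$ over \emph{some} edge of $B$ starting at $b$, not a lift of the particular edge $(b,b')$. This is why the paper's Lemma on the projections proves the stronger statement $(ii)_n$ (every edge of $B$ lifts into $E_n$) jointly with the surjectivity statement $(i)_n$, rather than deducing one from the other.
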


\begin{remark}
 Condition \ref{two} of Theorem \ref{main} implies that if $R$ is one of  $s_1^{-1},s_2^{-1},\ldots,s_m^{-1}$ and if $R^A(x,y)$, 
then either $x$ is $R^A$-outgoing or $y$ is $R^A$-incoming.
\end{remark}

\begin{proof}[Proof of Theorem \ref{main}]

Given $A=(A,s^A_1,\ldots,s_m^A),B=(B,s_1^B,\ldots,s_m^B),C=(C,s_1^C,\ldots,s_m^C)$, $\phi_1\colon B\to A$, 
$\phi_2\colon C\to A$,
we want to find $D$, $\phi_3\colon D\to B$ and $\phi_4\colon D\to C$ such that $\phi_1\circ \phi_3=\phi_2\circ \phi_4$.

We start with some definitions.
We let
\[
 D_0=\{(b,c)\in B\times C\colon \phi_1(b)=\phi_2(c)\}.
\]
For $R$ equal to one of $s_1,s_1^{-1},\ldots,s_m,s_m^{-1}$ we let
\[
R^{D_0}=\{((b,c), (b',c'))\in D_0\times D_0\colon (b,b')\in R^B, (c,c')\in R^C\}.
\]
Let $\pi_1\colon D_0\to B$ and $\pi_2\colon D_0\to C$ be the projections.
(We will also write $\pi_1,\pi_2$ for restrictions of $\pi_1,\pi_2$ to subsets of $D_0$.)
The surjectivity of $\pi_1$ and $\pi_2$ follows from the surjectivity of $\phi_1$ and $\phi_2$.

The relations $s_1^{D_0},\ldots,s_m^{D_0}$ do not have to be surjective. We  find $D\subseteq D_0$ so that 
$s_1^D=s_1^{D_0}\restriction D,\ldots,s_m^D=s_2^{D_0}\restriction D$ are surjective.
For $n=1,2,3,\ldots$ we let
\begin{equation*}
\begin{split}
 D_n= & \{(x',x'')\in D_{n-1}\colon\mbox{ for every } R = s_1,s_1^{-1},\ldots,s_m,s_m^{-1} \mbox{ there is } \\
& (y',y'')\in D_{n-1} \mbox{ such that } R^{D_0}((x',x''),(y',y''))\}.
\end{split}
\end{equation*}
Let $D=\bigcap_n D_n$. Clearly $s_1^{D_0}\restriction D,\ldots,s_m^{D_0}\restriction D$ are surjective.
We show that $\pi_1\colon D\to B$ and $\pi_2\colon D\to C$ are epimorphisms  (Lemma~\ref{proj}).

Define $E_0=D_0$.
Let $x\in A$. Let $R$ be such that $x$ is $R^A$-outgoing. 
For $n=1,2,3,\ldots$ we define
\begin{equation*}
\begin{split}
 E^x_n=&\{(x',x'')\in E_{n-1}\colon x=\phi_1(x')=\phi_2(x'') \mbox{ and there is }  (y',y'')\in E_{n-1}\\
 &\text{ such that }   R^{D_0}((x',x''),(y',y''))\},
\end{split}
\end{equation*}
and  let
$E_n=\bigcup_{x\in A}E_n^x$.

\begin{lemma}\label{omit}
Let $x\in A$. Let $R$ be such that $x$ is $R^A$-outgoing. Let $n$ be a positive natural number.
Suppose that $(x',x'')\in E_0$ with $\phi_1(x')=\phi_2(x'')=x$, $(y',y'')\in E_{n-1}$, and 
$R^{D_0}((x',x''),(y',y''))$. Then $(x',x'')\in E_{n}$.
\end{lemma}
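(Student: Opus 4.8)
\textbf{Proof proposal for Lemma \ref{omit}.}

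The plan is to unwind the definitions of $E_n^x$ and $E_n$ directly, observing that the only content of the lemma is that the sets $E_n$ stabilise in a controlled way: once a point is in $E_0$ and has, for the \emph{one} relation $R$ witnessing that its $\phi$-image is $R^A$-outgoing, an $R^{D_0}$-successor already in $E_{n-1}$, then that point survives into $E_n$. The key structural fact I would isolate first is that the definition of $E_n^x$ only ever refers to the single relation $R$ for which $x=\phi_1(x')=\phi_2(x'')$ is $R^A$-outgoing; by condition \ref{one} of Theorem \ref{main} this $R$ is uniquely determined by $x$, so there is no ambiguity, and the recursion defining $E_n^x$ is genuinely a one-step-at-a-time pruning governed by that $R$ alone.

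The main step is then an induction on $n$ showing $E_n \subseteq E_{n-1}$, i.e.\ that the sequence $(E_n)$ is decreasing. For $n=1$ this is immediate from $E_1^x \subseteq E_0 = E_0$. For the inductive step, suppose $E_{n-1}\subseteq E_{n-2}$; if $(x',x'')\in E_n^x$, then by definition there is $(y',y'')\in E_{n-1}\subseteq E_{n-2}$ with $R^{D_0}((x',x''),(y',y''))$ and $\phi_1(x')=\phi_2(x'')=x$, whence $(x',x'')\in E_{n-1}^x\subseteq E_{n-1}$. Taking unions over $x\in A$ gives $E_n\subseteq E_{n-1}$. Granting this, the lemma is essentially a re-reading: we are given $(x',x'')\in E_0$ with $\phi_1(x')=\phi_2(x'')=x$ and $x$ is $R^A$-outgoing for (the unique) $R$, and we are given $(y',y'')\in E_{n-1}$ with $R^{D_0}((x',x''),(y',y''))$. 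To conclude $(x',x'')\in E_n^x$ we must check that $(x',x'')\in E_{n-1}$ (so that the recursion at level $n$ can be applied); but this follows by downward induction using the same argument — if $(y',y'')\in E_{n-1}$ then $(x',x'')\in E_{n-1}^x$ provided $(x',x'')\in E_{n-2}$, and iterating down to $E_1^x\subseteq E_0$, which holds by hypothesis — so $(x',x'')$ belongs to $E_k^x$ for all $1\le k\le n$, in particular to $E_n^x\subseteq E_n$.

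The only point that requires a little care — and the place I expect to be the main obstacle — is making the "downward" part of the last argument honest: a priori knowing $(y',y'')\in E_{n-1}$ does not by itself put $(x',x'')$ into $E_{n-1}$, because membership of $(x',x'')$ in $E_{n-1}^x$ demands an $R^{D_0}$-successor of $(x',x'')$ lying in $E_{n-2}$, and the given successor $(y',y'')$ lies only in $E_{n-1}$. The fix is to note that $(y',y'')\in E_{n-1}\subseteq E_{n-2}\subseteq\cdots\subseteq E_0$ by the monotonicity just established, so the \emph{same} witness $(y',y'')$ serves at every lower level; feeding it into the recursion at level $k$ for each $k=1,\dots,n-1$ shows $(x',x'')\in E_k^x$ for all such $k$, and then the witness $(y',y'')\in E_{n-1}$ places $(x',x'')\in E_n^x$. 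Thus the whole proof reduces to: (1) establish $E_0\supseteq E_1\supseteq E_2\supseteq\cdots$ by induction; (2) observe the witness $(y',y'')$ persists in all lower $E_k$; (3) re-apply the defining recursion upward to land in $E_n$. No amalgamation machinery is needed here — the lemma is a purely bookkeeping statement about the fixed-point construction $D=\bigcap_n D_n$ and its $E$-companion, and its role will be in the later verification that $\pi_1,\pi_2$ restricted to $D$ remain epimorphisms.
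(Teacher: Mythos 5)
Your argument is correct and is essentially the paper's own proof: the paper likewise observes that $(y',y'')\in E_i$ for every $i=0,1,\ldots,n-1$ (the sets $E_i$ are nested) and then iterates the fact that $(x',x'')\in E_j$ together with the witness $(y',y'')\in E_j$ forces $(x',x'')\in E_{j+1}$, starting from $(x',x'')\in E_0$. The only cosmetic difference is that you establish the nesting $E_n\subseteq E_{n-1}$ by a separate induction, whereas it is immediate from the clause $(x',x'')\in E_{n-1}$ in the definition of $E_n^x$.
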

\begin{proof}
We have $(x',x'')\in E_0$ and  $(y',y'')\in E_i$, for every $i=0,1,\ldots, n-1$.
Furthermore, for every natural number $j$, if $(x',x'')\in E_j$ and $(y',y'')\in E_j$, then $(x',x'')\in E_{j+1}$. 
This gives us $(x',x'')\in E_n$.
\end{proof}

\begin{lemma}
 We have $E_n=D_n$ for every $n$.
\end{lemma}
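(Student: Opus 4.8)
The plan is to induct on $n$, the base case $E_0 = D_0$ being the definition. For the inductive step, assume $E_{n-1} = D_{n-1}$; I will show $E_n \subseteq D_n$ and $D_n \subseteq E_n$ separately. The inclusion $E_n \subseteq D_n$ is the routine direction: if $(x',x'') \in E_n$, then $(x',x'') \in E_n^x$ for $x = \phi_1(x') = \phi_2(x'')$, where $x$ is $R^A$-outgoing for some $R$; by definition there is $(y',y'') \in E_{n-1} = D_{n-1}$ with $R^{D_0}((x',x''),(y',y''))$. To conclude $(x',x'') \in D_n$ I must produce, for \emph{every} $R' = s_1, s_1^{-1}, \ldots, s_m, s_m^{-1}$, some witness $(z',z'') \in D_{n-1}$ with $R'^{D_0}((x',x''),(z',z''))$. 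For $R' = R$ this is the $(y',y'')$ just found. For the other choices of $R'$, this is where condition \ref{two} of Theorem \ref{main} (and the following Remark, for the $s_i^{-1}$) enters: since $x$ is $R^A$-outgoing, for each $R'$ with $R'^A(x, w)$ holding for some $w$, one checks using \ref{one} and \ref{two} that $x$ is \emph{not} $R'^A$-outgoing for $R' \ne R$, hence $x$ is $R'^A$-incoming or has a unique $R'$-successor; in either case the relevant $R'$-successor of $x$ in $A$ is unique, and since $\pi_1, \pi_2$ and hence the coordinatewise relations $R'^{D_0}$ push down onto $R'^A$, the surjectivity built into $D_0$ at level $0$ (or rather, the fact that $x'$ has an $R'^B$-successor in $B$ and $x''$ an $R'^C$-successor in $C$) produces a witness, and uniqueness in $A$ forces this witness to already lie in $D_{n-1}$.

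For the reverse inclusion $D_n \subseteq E_n$: take $(x',x'') \in D_n \subseteq D_{n-1} = E_{n-1}$, and set $x = \phi_1(x') = \phi_2(x'')$, with $x$ being $R^A$-outgoing for the unique such $R$ (using \ref{one}). Since $(x',x'') \in D_n$, there is in particular $(y',y'') \in D_{n-1} = E_{n-1}$ with $R^{D_0}((x',x''),(y',y''))$. Now I apply Lemma \ref{omit}: the hypotheses $(x',x'') \in E_0$ (as $D_n \subseteq D_0 = E_0$), $(y',y'') \in E_{n-1}$, and $R^{D_0}((x',x''),(y',y''))$ are exactly what Lemma \ref{omit} requires, and it yields $(x',x'') \in E_n$. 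This is the clean direction once Lemma \ref{omit} is in hand.

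The main obstacle is the bookkeeping in the $E_n \subseteq D_n$ direction: one must carefully use the structural hypotheses \ref{one} and \ref{two} on $A$ to argue that, for a point $x$ that is $R^A$-outgoing, all the \emph{other} required $R'$-witnesses in the definition of $D_n$ are automatically present in $D_{n-1}$ — the definition of $E_n^x$ only explicitly demands a witness for the single relation $R$ for which $x$ is outgoing, whereas $D_n$ demands witnesses for all $2m$ relations, so the content is showing that outgoingness for $R$ plus the hypotheses of Theorem \ref{main} make the other $2m - 1$ conditions free. Once that is verified, the two inclusions combine with the inductive hypothesis to give $E_n = D_n$, completing the induction.
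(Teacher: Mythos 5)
Your overall architecture matches the paper's: induction on $n$, with $D_n\subseteq E_n$ the easy inclusion (a point of $D_n$ has witnesses for all $2m$ relations, in particular for the one relation $R$ for which $x$ is $R^A$-outgoing, so it lies in $E_n^x$; your appeal to Lemma \ref{omit} here is harmless but not even needed) and $E_n\subseteq D_n$ the substantive one. You also correctly identify that the content of the hard inclusion is manufacturing, for a point $(x',x'')$ whose definition only supplies a witness for the single relation $R$ for which $x$ is outgoing, witnesses in $D_{n-1}$ for the remaining $2m-1$ relations $R'$.

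However, at exactly that step your argument has a genuine gap. For $R'\neq R$ you produce a pair $(y',y'')$ with $R'^{D_0}((x',x''),(y',y''))$ and argue, via uniqueness of the $R'$-successor of $x$ in $A$, that $\phi_1(y')=\phi_2(y'')$. Even granting that, this only places $(y',y'')$ in $D_0=E_0$; your closing assertion that ``uniqueness in $A$ forces this witness to already lie in $D_{n-1}$'' is unsupported, and no amount of information about $A$ alone can promote a pair from $E_0$ to $E_{n-1}$. The paper's mechanism for this promotion is the real point of the whole construction, and it is the one ingredient missing from your plan: by condition \ref{two} of Theorem \ref{main}, since $x$ is not $R'^A$-outgoing the target $y$ is $R'^A$-incoming, i.e.\ $(R'^{-1})^A$-outgoing; since $(R'^{-1})^{D_0}((y',y''),(x',x''))$ and $(x',x'')$ is already known (from $E_{n+1}\subseteq E_n=D_n$, by the inductive hypothesis) to lie in a high level $E_k$, Lemma \ref{omit} applied to $(y',y'')$ \emph{with the inverse relation pointing back at} $(x',x'')$ bootstraps $(y',y'')$ from $E_0$ up to $E_{n+2}\subseteq E_n=D_n$. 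In other words, condition \ref{two} is not there to give you uniqueness of successors; it is there to guarantee that every extra witness you need points back at $(x',x'')$ along a relation for which \emph{it} is outgoing, so that it inherits the high $E$-level of $(x',x'')$. Without this backward application of Lemma \ref{omit} the induction does not close. (A secondary caution: the dichotomy you assert, that a non-$R'$-outgoing point is $R'$-incoming or has a unique $R'$-successor, also needs more justification than ``one checks using \ref{one} and \ref{two}''; but the missing bootstrap is the essential defect.)
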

\begin{proof}
 This is clear for $n=0$.
Suppose it holds for $n$, and we prove it for $n+1$. Clearly $D_{n+1}\subseteq E_{n+1}$. We show $E_{n+1}\subseteq D_{n+1}$.
Take $(x',x'')\in E_{n+1}$. So $(x',x'')\in E_n=D_n$.

First let $R$ 
be such that $x=\phi_1(x')=\phi_2(x'')$ is $R^A$-outgoing.
Then, from the definition of $E^x_{n+1}$, there is $(y',y'')\in E_n=D_n$ such that $R^{D_0}((x',x''),(y',y''))$.

Now let $R$ 
be such that $x$ is not $R^A$-outgoing.
Take $y\in A$ such that $R^A(x,y)$.
Since $x$ is not $R^A$-outgoing, 
$y$ is $R^A$-incoming.
Take any $y'\in B$ and $y''\in C$ such that $R^B(x',y')$ and $R^C(x'',y'')$.
Again, since $x$ is not $R^A$-outgoing,  $y=\phi_1(y')=\phi_2(y'')$, so $(y',y'')\in E_0$. 
From the fact that $y$ is $(R^{-1})^A$-outgoing and $(R^{-1})^{D_0}((y',y''),(x',x''))$,
by Lemma \ref{omit}, we get
$(y',y'')\in E_{n+2}$. Therefore $(y',y'')\in E_n=D_n$. We have proved that $(x',x'')\in D_{n+1}$.
\end{proof}

\begin{lemma}\label{proj}
For every $n=0,1,2,\ldots$:
\begin{enumerate}
 \item[$(i)_n$] $\pi_1[E_n]=B$;\\
\item[$(ii)_n$] for $x',y'\in B$ with $R^B(x',y')$, where $R$ is one of $s_1,s_1^{-1},\ldots,s_m,s_m^{-1}$,
there are $x'',y''\in C$ such that $R^C(x'',y'')$, $\phi_1(x')=\phi_2(x'')$, $\phi_1(y')=\phi_2(y'')$,\\ and $(x',x''),(y',y'')\in E_n$.
\end{enumerate}
\end{lemma}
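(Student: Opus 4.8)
The plan is to prove $(i)_n$ and $(ii)_n$ simultaneously by induction on $n$, using the fact — just established — that $E_n = D_n$, so that membership in $E_n$ can be detected by the ``outgoing'' characterization from Lemma~\ref{omit}. The base case $n=0$ is immediate: $\pi_1\colon D_0 = E_0 \to B$ is onto because $\phi_1$ is onto $A$ and $\phi_2$ is onto $A$ (so every $b\in B$ has $\phi_1(b)=\phi_2(c)$ for some $c$, giving $(b,c)\in D_0$), and for $(ii)_0$, if $R^B(x',y')$ then, applying $\phi_1$ and using that $\phi_1$ is an epimorphism, $R^A(\phi_1(x'),\phi_1(y'))$; since $\phi_2$ is an epimorphism there are witnesses $x'',y''\in C$ with $\phi_2(x'')=\phi_1(x')$, $\phi_2(y'')=\phi_1(y')$, and $R^C(x'',y'')$ — here one uses that $C$ is a finite structure and that for a binary relation the epimorphism condition lifts the pair. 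So $(x',x''),(y',y'')\in D_0=E_0$.

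For the inductive step, assume $(i)_n$ and $(ii)_n$. The key is to run the argument through a vertex that is outgoing. Fix $b\in B$; by $(i)_n$ there is $c\in C$ with $(b,c)\in E_n$. Let $x=\phi_1(b)=\phi_2(c)\in A$, and let $R$ be the unique relation among $s_1,s_1^{-1},\ldots,s_m,s_m^{-1}$ for which $x$ is $R^A$-outgoing (this is where condition~\ref{one} of Theorem~\ref{main} is used: every point of $A$ is outgoing for exactly one such $R$). Since $s_i^B$ are surjective, $b$ is $R^B$-connected to some $b'\in B$: pick $b'$ with $R^B(b,b')$. Now apply $(ii)_n$ to the pair $R^B(b,b')$ — wait, I instead want to go the other way: I want to find a partner for $b$ inside $E_{n+1}$. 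Concretely, since $x$ is $R^A$-outgoing, every $b$ with $\phi_1(b)=x$ needs an $R$-successor partner; so choose any $b'$ with $R^B(b,b')$ and, using $(ii)_n$ applied to $R^B(b,b')$, obtain $c,c'\in C$ with $R^C(c,c')$, $\phi_2(c)=x$, $\phi_2(c')=\phi_1(b')$, and $(b,c),(b',c')\in E_n$. Then $R^{D_0}((b,c),(b',c'))$ holds, and since $x$ is $R^A$-outgoing and $(b',c')\in E_n=D_n$, Lemma~\ref{omit} gives $(b,c)\in E_{n+1}$. As $b$ was arbitrary this proves $(i)_{n+1}$.

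For $(ii)_{n+1}$: take $x',y'\in B$ with $R^B(x',y')$, $R$ one of $s_1,s_1^{-1},\ldots,s_m,s_m^{-1}$, and set $x=\phi_1(x')$, $y=\phi_1(y')$, so $R^A(x,y)$. By condition~\ref{two} of Theorem~\ref{main} (and the Remark following it, which extends it to the inverse symbols), either $x$ is $R^A$-outgoing or $y$ is $R^A$-incoming, i.e.\ $y$ is $(R^{-1})^A$-outgoing. In the first case, by $(i)_{n+1}$ just proved there is $x''\in C$ with $(x',x'')\in E_{n+1}$ and $\phi_2(x'')=x$; since $x$ is $R^A$-outgoing, unwinding the definition of $E^x_{n+1}$ there is $(u',u'')\in E_n$ with $R^{D_0}((x',x''),(u',u''))$ — but I actually need the partner over $y'$ specifically, so instead: pick any $y''\in C$ with $R^C(x'',y'')$; since $x$ is $R^A$-outgoing, $y$ is $R^A$-incoming, which forces $\phi_2(y'')=y=\phi_1(y')$, so $(y',y'')\in E_0$, and then $R^{D_0}((x',x''),(y',y''))$ together with $(x',x'')\in E_{n+1}=D_{n+1}$ and $y$ being $R^A$-incoming $=(R^{-1})^A$-outgoing lets Lemma~\ref{omit} (applied with $R^{-1}$) push $(y',y'')$ into $E_{n+1}$. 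The second case, $y$ being $(R^{-1})^A$-outgoing, is symmetric: start from a partner $(y',y'')\in E_{n+1}$ guaranteed by $(i)_{n+1}$, take any $x''\in C$ with $R^C(x'',y'')$, use that $x$ is $R^A$-outgoing to see $\phi_2(x'')=x$, and apply Lemma~\ref{omit} with $R$ to get $(x',x'')\in E_{n+1}$. The main obstacle is bookkeeping: making sure in each branch that the chosen witness in $C$ actually projects to the correct point of $A$ — this is exactly where the dichotomy of condition~\ref{two} does the work, since ``not outgoing at the tail'' is precisely what forces the head to be incoming and hence the projection to be determined — and that the correct one of $R$, $R^{-1}$ is fed to Lemma~\ref{omit}.
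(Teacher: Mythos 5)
Your induction scaffolding matches the paper's --- $(i)_0$ directly, then $(ii)_n\Rightarrow(i)_{n+1}$, with $(ii)$ at a given level deduced from $(i)$ at that level --- and both your base case and your derivation of $(i)_{n+1}$ are correct (the initial appeal to $(i)_n$ there is redundant, since you only need $x=\phi_1(b)$, but it is harmless). The proof of $(ii)_{n+1}$, however, contains genuine errors in both branches. First, the inference ``since $x$ is $R^A$-outgoing, $y$ is $R^A$-incoming'' is false: condition \ref{two} of Theorem \ref{main} is a disjunction, and one disjunct holding says nothing about the other. Concretely, in the structures built in Theorem \ref{cap}, an edge put into $R^B$ by clause (ii) of that construction runs from an outgoing point $a(A^{+R})$ to a point with a unique $R^B$-predecessor and a unique $R^B$-successor, which is not $R^B$-incoming. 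Second, and more fundamentally, your ``pinning down'' steps run backwards. If $\phi_2(x'')=x$ and you pick an arbitrary $y''$ with $R^C(x'',y'')$, all you know is that $\phi_2(y'')$ is \emph{some} $R^A$-successor of $x$; this forces $\phi_2(y'')=y$ only when $x$ has a unique successor, and $x$ being $R^A$-outgoing means precisely that it has more than one. Symmetrically, in your second branch an arbitrary $x''$ with $R^C(x'',y'')$ has $\phi_2(x'')$ equal to some $R^A$-predecessor of $y$, which is determined only when $y$ has a unique predecessor --- but $y$ being $R^A$-incoming means it has more than one. So in both branches the chosen witness need not lie over the right point of $A$, and $(y',y'')\in E_0$ (respectively $(x',x'')\in E_0$) does not follow.

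The configuration you have not handled is exactly the one where neither endpoint can be recovered from the other: $x$ is $R^A$-outgoing \emph{and} $y$ is $R^A$-incoming. There one cannot choose the witness in $C$ one coordinate at a time; one must lift the whole edge: since $\phi_2$ is an epimorphism and $R^A(x,y)$, there are $x'',y''\in C$ with $R^C(x'',y'')$, $\phi_2(x'')=x$, and $\phi_2(y'')=y$ simultaneously. Then $(x',x''),(y',y'')\in E_0$ with $R^{D_0}((x',x''),(y',y''))$, and since $x$ is $R^A$-outgoing and $y$ is $(R^{-1})^A$-outgoing, alternating applications of Lemma \ref{omit} (to $(x',x'')$ with $R$ and to $(y',y'')$ with $R^{-1}$) push both pairs into $E_n$ for every $n$. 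The complementary case, where $y$ is \emph{not} $R^A$-incoming, is the one in which the backward-determination argument is available: take $y''$ with $(y',y'')\in E_n$ from $(i)_n$, pick any $x''$ with $R^C(x'',y'')$, and use that $y$'s $R^A$-predecessor is then unique to get $\phi_2(x'')=x$; the definition of $E^x_{n+1}$ then gives $(x',x'')\in E_{n+1}\subseteq E_n$. You should redo $(ii)$ with this case division, after first reducing, via the symmetry $R\leftrightarrow R^{-1}$, to the situation where $x$ is $R^A$-outgoing.
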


\begin{proof}[Proof of Lemma \ref{proj}]
{\bf{Proof of $(i)_0$:}}
Clear.

{\bf{Proof that $(ii)_n$ implies $(i)_{n+1}$}}:
Let $x'\in B$ be given. Let $R$ be such that $x=\phi_1(x')$ is $R^A$-outgoing. Take any $y'\in B$ such that $R^B(x',y')$.
Now from $(ii)_n$ we get $x'',y''\in C$ such that
 $(x',x''),(y',y'')\in E_n$ and $R^C(x'',y'') $. 
From the definition of $E_{n+1}^x$ we get $(x',x'')\in E_{n+1}$.

{\bf{Proof that $(i)_n$ implies $(ii)_{n}$}}:
Let $R$ and $x',y'\in B$ with $R^B(x',y')$ be given.
Let $x=\phi_1(x')=\phi_2(y')$. We can assume that $x$ is $R^A$-outgoing. (Otherwise, $y$ is $(R^{-1})^A$-outgoing and the proof is the same.)

If $y$ is $R^A$-incoming, then take any $x'',y''\in C$ with $\phi_2(x'')=x$, $\phi_2(y'')=y$, and $R^C(x'', y'')$. So $R^{D_0}((x',x''),(y',y''))$.
Since $x$ is $R^A$-outgoing and $y$ is $(R^{-1})^A$-outgoing, from Lemma \ref{omit} we get $(x',x''),(y',y'')\in E_n$.

If  $y$ is not $R^A$-incoming, use $(i)_n$ to find $y''\in C$ such that $(y',y'')\in E_n$.
Now take any $x''\in C$ such that $R^C(x'', y'')$. 
Then since $y$ is not $R^A$-incoming, we have
$\phi_2(x'')=x$. 
Note further that since $R^{D_0}((x',x''),(y',y''))$, from the definition of $E_{n+1}^x$, we get $(x',x'')\in E_{n+1}\subseteq E_n$.
This shows $(ii)_{n}$.
\end{proof}


Since there clearly  is $n$ such that $D=E_n$, Lemma \ref{proj} implies that $\pi_1$ is an epimorphism. We similarly show that $\pi_2$ is an epimorphism. Therefore $\phi_3=\pi_1\restriction D$ and $\phi_4=\pi_2\restriction D$ work.

\end{proof}

\begin{theorem}\label{cap}
 The collection of all $B=(B,s_1^B,\ldots,s_m^B)$ satisfying the hypotheses of Theorem \ref{main} is coinitial in $\mathcal{F}$.
\end{theorem}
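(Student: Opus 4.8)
The plan is to prove Theorem~\ref{cap} directly: given an arbitrary $A=(A,s_1^A,\ldots,s_m^A)\in\mathcal F_0$, I will construct $B=(B,s_1^B,\ldots,s_m^B)$ satisfying conditions (1) and (2) of Theorem~\ref{main} together with an epimorphism $\phi\colon B\to A$, which is exactly coinitiality of $\mathcal F$ in $\mathcal F_0$. The structure $B$ will be a bundle over $A$: I fix a finite fiber set $C$ equipped with a surjective map $c_0\colon C\to\{1,\ldots,m\}\times\{+,-\}$, put $B=A\times C$ with $\phi$ the coordinate projection, and colour $B$ by $c(a,v)=c_0(v)$. Writing $B_i^+=c^{-1}(i,+)$ and $B_i^-=c^{-1}(i,-)$, the sets $\{B_i^+,B_i^-:i\le m\}$ partition $B$ and every fiber $F_a=\{a\}\times C$ meets each of them. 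Each $s_i^B$ will be defined so that $s_i^B((a,v),(b,w))$ holds only if $s_i^A(a,b)$; then $\phi$ preserves the relations in the forward direction for free, and $\phi$ is an epimorphism as soon as every edge of each $s_i^A$ is witnessed by some edge of $s_i^B$.

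The point is that, once the colouring $c$ is fixed, conditions (1) and (2) decouple over the index $i$. A point of colour $(i,+)$ must be $s_i$-outgoing and must be neither outgoing nor incoming for every other relation; unwinding the definitions, this forces it to have $s_i$-out-degree $\ge 2$ and $s_i$-in-degree $1$, and $s_j$-out-degree $=s_j$-in-degree $=1$ for all $j\ne i$ (and symmetrically for colour $(i,-)$). Together with condition (2) for $s_i$, the relation $s_i^B$ is thereby pinned down in type by the single partition $B=B_i^+\sqcup B_i^-\sqcup N_i$, where $N_i=\bigcup_{j\ne i}(B_j^+\cup B_j^-)$: $s_i^B$ must give each $B_i^+$-point exactly one $s_i$-in-edge, necessarily from $B_i^+$; each $B_i^-$-point exactly one $s_i$-out-edge, necessarily into $B_i^-$; each $N_i$-point its unique $s_i$-in-edge from $B_i^+$ and its unique $s_i$-out-edge into $B_i^-$; and the remaining out-edges of $B_i^+$-points serve to raise every $B_i^-$-point's in-degree and every $B_i^+$-point's out-degree to at least $2$. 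Since these demands mention only the $i$-th partition, and since no point is ever asked to be ``heavy'' for two relations at once, I can build $s_1^B,\ldots,s_m^B$ one relation at a time over the fixed set $B$. One must choose $c_0$ so that all the classes $B_i^\pm$ are nonempty --- a bijective $s_i^B$ over an arbitrary $s_i^A$ need not exist (e.g. over $(\{0,1\},\{(0,0),(0,1),(1,1)\})$ no bijective refinement exists), so $B_i^+\cup B_i^-$ cannot be empty --- but this is harmless since $C$ is ours to choose.

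For each fixed $i$ the construction of $s_i^B$ is then an explicit edge assignment, and the only thing that could go wrong is a missing neighbour of the required colour; because every fiber contains points of every colour and $A$ is surjective, each vertex of $A$ has, for every $i$, in- and out-neighbours whose fibers carry whatever colour is needed, so the internal edges (the $B_i^+\to B_i^+$ predecessor edges, the $B_i^-\to B_i^-$ successor edges, and the $B_i^+\to N_i\to B_i^-$ channels) can be placed, and the surplus $B_i^+$-out-edges can be distributed over all out-neighbours so as simultaneously to cover every $s_i^A$-edge and to push all the relevant degrees up to $2$; taking $C$ large enough relative to the maximal in- and out-degrees occurring in the $s_i^A$ leaves room for all of this at once. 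Surjectivity of each $s_i^B$ and the lifting half of ``$\phi$ is an epimorphism'' are then read off from the construction, so $B$ has the required form and maps onto $A$. The main obstacle is exactly this last step: exhibiting, for each $i$, one edge assignment on $B$ that at once realizes the prescribed in/out-degrees everywhere (for condition (1)), contains no $s_i^B$-edge offending condition (2), keeps $s_i^B$ surjective, and realizes all of $s_i^A$ (needed for $\phi$) --- all under the rigid constraint that every edge of $B$ must lie over an edge of $A$. What makes it tractable is the decoupling over $i$ together with the choice of a fiber rich enough that no colour is ever locally unavailable.
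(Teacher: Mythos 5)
Your construction is essentially the paper's: there $B$ is taken to be $4m$ disjoint copies of $A$ (i.e.\ $A\times C$ with exactly two fiber points of each colour $(i,\pm)$), and its edge rules (i)--(iii) are precisely your predecessor/successor channels plus the surplus $B_i^+\to B_i^-$ edges lying over all of $s_i^A$, so the edge assignment you defer does go through. The one point to make explicit is that each colour class must meet every fiber in at least \emph{two} points (not merely be nonempty, which is all you require of $c_0$): otherwise a $B_i^+$-point over a vertex with a single $s_i^A$-out-neighbour need not acquire out-degree $\geq 2$, which is why the paper keeps the hatted copies $\widehat{A}^{\pm s_i}$ alongside $A^{\pm s_i}$.
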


\begin{proof}
Given  $A=(A,s^A_1,\ldots,s_m^A)$, we take $4m$ disjoint copies of $A$. Call them \\ $A^{+s_i},\widehat{A}^{+s_i},A^{-s_i},\widehat{A}^{-s_i}$,
$i=1,2,\ldots, m$.
Now we define  $B=(B,s_1^B,\ldots,s_m^B)$. 
Let
\[
 B= \bigcup_i\left( A^{+s_i}\cup\widehat{A}^{+s_i}\cup A^{-s_i}\cup\widehat{A}^{-s_i}\right)
\]
be the underlying set.

First some notation. Let $R$ be one of $s_1,\ldots,s_m$. 
For $a\in A$, the copy of $a$ in $A^{+R}$ will be denoted by $a(A^{+R})$, etc.
For $b\in B$, by $p(b)$ we denote the corresponding element in $A$.

Now we define $R^B$.
\begin{enumerate}
 \item For every $(x,y)\in R^A$ we put $(x(A^{+R}),y(A^{-R}))$,  $(x(\widehat{A}^{+R}),y(A^{-R}))$,
 $(x(A^{+R}),\\y(\widehat{A}^{-R}))$, and $(x(\widehat{A}^{+R}),y(\widehat{A}^{-R}))$ into $R^B$.\\
\item For every $b\in B$ choose exactly one $a\in A$ such that $(a,p(b))\in R^A$, and put $(a(A^{+R}),b)$ into $R^B$.\\
\item For every $b\in B$ choose exactly one $a'\in A$ such that $(p(b),a')\in R^A$, and put $(b,a'(A^{-R}))$ into $R^B$.
\end{enumerate}

The relations $s_1^B$ and $s_2^B$ are surjective and
the natural projection from $B$ onto $A$ is an epimorphism. We show that $(B, s_1^B, \ldots, s_m^B)$ is as needed.

{\bf{Claim.}}
The structure $B$ satisfies the hypotheses of Theorem \ref{main}.

\begin{proof}
Let $i=1,2,\ldots,m$.
From the definition of $s_i^B$,
the $s_i^B$-outgoing points are exactly $a(A^{+s_i})$ and $a(\widehat{A}^{+s_i})$, $a\in A$, and 
$(s_i^{-1})^B$-outgoing points
 are exactly $a(A^{-s_i})$ and $a(\widehat{A}^{-s_i})$, $a\in A$.
From this we get \ref{one} of Theorem \ref{main}.
From (i), (ii) and (iii) in the definition of $R^B$ it is clear that  \ref{two} of Theorem \ref{main} is also satisfied.

\end{proof}

\end{proof}
In the rest of this section we show:
\begin{theorem}\label{ample}
The projective Fra\"{i}ss\'{e} limit of $\f$ is a generic tuple in $H(2^\n)^m$.
\end{theorem}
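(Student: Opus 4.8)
The plan is to mimic the standard Kechris--Rosendal strategy for showing that a projective Fra\"{i}ss\'{e} limit gives a generic tuple, adapting it to the present setting where $\f$ is coinitial in $\f_0$. First I would identify the point of the construction: a tuple $(h_1,\ldots,h_m)\in H(2^\n)^m$ together with a clopen partition $P$ of $2^\n$ gives, by restriction, an $m$-tuple of surjective relations on the finite set $P$, i.e.\ a member of $\f_0$; and conversely an epimorphism $\phi\colon \lan\to A$ with $A\in\f_0$ corresponds to a clopen partition of $\lan$. So the first step is to argue that the limit $(\lan, s_1^\lan,\ldots,s_m^\lan)$ is (the graph of) an $m$-tuple of homeomorphisms of the Cantor set: $\lan$ is the Cantor set exactly as in Proposition~\ref{cantor} (no isolated points, using (L2) and the extension property), and each $s_i^\lan$ is the graph of a homeomorphism by the argument of Proposition~\ref{bijekcja}, which is flagged in the text as Proposition~\ref{bijekcjaa}. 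This uses condition~\ref{two} of Theorem~\ref{main} in exactly the way Proposition~\ref{map} was used for spirals: given any $A$ and $\psi_1\colon\lan\to A$ separating two witnesses, one finds $B\to A$ in $\f$ over which the relevant relation becomes functional at the relevant fibre, then pulls back via the extension property.

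Next I would set up the conjugacy problem. Fix the tuple $\bar h=(h_1,\ldots,h_m)$ induced by $\lan$. I want to show its diagonal conjugacy orbit is comeager in $H(2^\n)^m$. The plan is to prove two things: (a) the orbit is dense, and (b) the orbit is $G_\delta$. For density, given a basic open set in $H(2^\n)^m$ determined by a clopen partition $P$ of $2^\n$ and a choice of an $m$-tuple $A\in\f_0$ of surjective relations on $P$ that some tuple in that open set realizes, use coinitiality of $\f$ in $\f_0$ (Theorem~\ref{cap}) to find $B\in\f$ and an epimorphism $B\to A$; by projective universality (L1) there is an epimorphism $\lan\to B$, hence $\lan\to A$; transporting the partition of $\lan$ so obtained to $2^\n$ by a homeomorphism $g$ conjugates $\bar h$ into the given open set. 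This is the standard ``back'' direction and it is where coinitiality is essential: without it, $\f$ would not see all finite approximations. For the $G_\delta$ part, the key is projective ultrahomogeneity (L3): I would show that $g\bar h g^{-1}=\bar h$ iff for every epimorphism $\phi\colon\lan\to A$ with $A\in\f$ there is an epimorphism $\phi'\colon\lan\to A$ with $\phi'=\phi\circ (\text{action of }g)$ in the appropriate sense --- more precisely one characterizes membership in the orbit by an ``extension/amalgamation'' condition over the countably many pairs $(A,\phi)$, each condition being open, so that the orbit is an intersection of countably many dense open sets, hence comeager. This mirrors Propositions~\ref{dense} and \ref{gdelta} referenced earlier for the spiral case, which I would invoke or reprove.

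More concretely for (b), I expect the cleanest route is the criterion: a tuple $\bar k\in H(2^\n)^m$ is conjugate to $\bar h$ iff $\bar k$ satisfies both the ``joint embedding into $\f$'' condition (every finite restriction of $\bar k$ to a clopen partition admits, after refinement, an epimorphism from some member of $\f$ onto it --- automatic here since $\f$ is coinitial in $\f_0$, which is all finite tuples of surjective relations) \emph{and} the extension property phrased for $\bar k$ (for $\phi_1\colon B\to A$ in $\f$ and $\phi_2\colon \bar k\to A$ there is $\psi\colon \bar k\to B$ with $\phi_2=\phi_1\circ\psi$). The first is open-dense-in-each-coordinate trivially; the second is a countable conjunction over $(\phi_1\colon B\to A,\, \text{clopen partition realizing }A)$ of conditions ``there is a finer clopen partition realizing $B$ compatibly,'' each of which is open, and each of which is dense by Theorem~\ref{main} (amalgamation over $A\in\f$) together with Theorem~\ref{cap}. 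Then Proposition~\ref{fraisse}(ii) shows any $\bar k$ satisfying universality, the extension property, and (L2) is isomorphic to $\lan$, i.e.\ conjugate to $\bar h$; hence the orbit equals this $G_\delta$ set, and density from (a) makes it comeager.

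The main obstacle I anticipate is not density but the bookkeeping in (b): one must phrase the extension property for an abstract tuple $\bar k$ (not a priori a Fra\"{i}ss\'{e} limit) purely in terms of clopen partitions and show the resulting conditions are genuinely open in $H(2^\n)^m$ and that satisfying all of them forces $\bar k\cong\lan$; the subtlety is that a single epimorphism $\phi_2\colon\bar k\to A$ corresponds to a clopen partition together with a \emph{verification} that the induced surjective relations on that partition coincide with $s_i^A$, and ``refining compatibly'' must be shown to be an open condition and achievable --- this is exactly where one leans on amalgamation (Theorem~\ref{main}) and on the fact that $\f$ is coinitial in $\f_0$ (Theorem~\ref{cap}) so that arbitrary clopen refinements still land in the reach of $\f$. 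Once that framework is in place, the argument is the routine Kechris--Rosendal machine, and I would keep the exposition parallel to the spiral case (Propositions~\ref{dense}, \ref{gdelta}) to avoid repetition.
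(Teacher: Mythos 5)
Your proposal is correct and follows essentially the same route as the paper: establish that the limit relations are graphs of homeomorphisms (Proposition~\ref{bijekcjaa}), prove density of the conjugacy class via projective universality plus coinitiality of $\f$ in $\f_0$ (Proposition~\ref{dense}), and prove the class is $G_\delta$ by expressing membership as the conjunction of (L1), the extension property, and (L2) over countably many data, each an open condition, invoking Proposition~\ref{fraisse}(ii) (Proposition~\ref{gdelta}). The only cosmetic difference is your initial mention of (L3), which you correctly abandon in favor of the extension-property criterion actually used in the paper.
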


Denote the projective Fra\"{i}ss\'{e} limit of $\f$ by $\mathbb{L}=(\mathbb{L},s_1^\mathbb{L},\ldots,s_m^\mathbb{L})$. 
First we show  that  closed relations $s_1^\mathbb{L},\ldots,s_m^\mathbb{L}$ are graphs of homeomorphisms of the Cantor set,
and then we show that the homeomorphisms induced by $s_1^\mathbb{L},\ldots,s_m^\mathbb{L}$ form a generic tuple, that is, the diagonal conjugacy class of this tuple is comeager.
We borrow some ideas from 
\cite{AGW} (from the proofs of Proposition 3.2  and   Theorem 3.3 in \cite{AGW}).

Let 
\[
\mathcal{G}_0=\{(A,s^A)\colon A \mbox{ is a finite set and } s^A \mbox{ is a surjective relation}\}.
\]
\begin{lemma}\label{init}
 The family $\mathcal{G}$ of spiral structures (defined in Section 2) is coinitial in $\mathcal{G}_0$.
\end{lemma}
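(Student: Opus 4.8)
The plan is to prove Lemma~\ref{init} by showing that any finite structure $(A,s^A)\in\mathcal{G}_0$ is the epimorphic image of some spiral structure. First I would analyze the combinatorial structure of a surjective relation $s^A$ on a finite set $A$: thinking of $(A,s^A)$ as a finite directed graph in which every vertex has in-degree and out-degree at least one, I would decompose the walks through $A$. The key observation is that a spiral, via the maps described in Proposition~\ref{map}, surjects onto structures built from a single closed ``loop'' (the middle line of the spiral wraps into two circles, creating the eventual periodicity in both directions), so I want to realize $(A,s^A)$ by gluing together the images of finitely many spirals.

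Concretely, the approach I would take is: choose for each ordered pair $(p,q)\in s^A$ a bi-infinite walk $(p_i)_{i\in\mathbb{Z}}$ in $(A,s^A)$ with $s^A(p_i,p_{i+1})$ for all $i$, passing through the edge $(p,q)$ at step $0$; since $A$ is finite, such a walk can be taken eventually periodic as $i\to+\infty$ and as $i\to-\infty$ (pick any walk, it must eventually repeat a vertex in each direction because $A$ is finite and $s^A$ has the surjectivity property needed to extend walks indefinitely in both directions). As described in the paragraph following the definition of a spiral structure in Section~\ref{stwo}, such an eventually-periodic-in-both-directions walk can be identified with a spiral $N_{(p,q)}$: take one full period $p_{k-K+1},\dots,p_k,\dots,p_l,\dots,p_{l+L-1}$ of the left-tail period and right-tail period, with $p_l$ the left node and $p_k$ the right node, and the map $N_{(p,q)}\to A$ sending each spiral vertex to the corresponding $p_i$ is relation preserving and by construction hits the edge $(p,q)$. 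Taking the disjoint union $N=\bigsqcup_{(p,q)\in s^A}N_{(p,q)}$ gives a spiral structure, and the induced map $\phi\colon N\to A$ is relation preserving and surjective onto $A$ (every vertex of $A$ lies on some chosen walk, since $s^A$ is surjective so every vertex is the left coordinate of at least one pair in $s^A$). I must also check $\phi$ is an epimorphism in the precise sense of Section~\ref{sone}: for $(p,q)\in s^A$ I need $y,y'\in N$ with $\phi(y)=p$, $\phi(y')=q$, and $R^N(y,y')$ — but this is exactly guaranteed by having included the spiral $N_{(p,q)}$ which contains such an edge mapping onto $(p,q)$.

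The main obstacle I anticipate is making the identification of an eventually-periodic bi-infinite walk with a genuine spiral fully rigorous, in particular handling degenerate cases: the definition of a spiral requires $1<x_N<y_N<n$ with strictly positive left circle, middle line, and right circle, so if the chosen walk has a very short period or the periodic parts overlap awkwardly, the naive identification may fail to satisfy $x_N<y_N$ or may produce empty circles. I would deal with this by noting that periods can always be taken as long as we like (replace period $K$ by $2K$, or pad the middle line) and the forward and backward periodic parts can be arranged disjointly by going far enough out in each direction before cutting; since Proposition~\ref{map} only requires divisibility and size inequalities that we are free to arrange, there is enough slack to force the spiral to be non-degenerate. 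A secondary point to verify carefully is that the relation $R^N$ we put on each $N_{(p,q)}$ really does map \emph{onto} the sub-walk — i.e.\ that the extra edges $R^N(x_N,1)$ and $R^N(n,y_N)$ of the spiral correspond to actual edges of $s^A$ closing up the periodic loops — which holds because eventual periodicity means $s^A(p_l,p_{l-L+1})$ and $s^A(p_{k+K-1},p_k)$, exactly the edges noted in the discussion in Section~\ref{stwo}. Once these points are settled, coinitiality of $\mathcal{G}$ in $\mathcal{G}_0$ follows immediately.
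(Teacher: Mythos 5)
Your proposal is correct and follows essentially the same route as the paper: for each edge $(x_0,x_1)\in s^A$ one extends it to a bi-infinite walk that is eventually periodic in both directions, reads off a spiral $M_{(x_0,x_1)}$ with a relation preserving map onto $A$ hitting that edge, and takes the disjoint union over all edges. Your additional care about non-degeneracy of the spiral (lengthening periods and separating the two periodic tails) and about the closing edges $R^N(x_N,1)$, $R^N(n,y_N)$ matching actual edges of $s^A$ fills in details the paper leaves implicit, but does not change the argument.
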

\begin{proof}
 Take any $A\in \mathcal{G}_0$. Take $x_0,x_1\in A$ with $R^A(x_0,x_1)$.
Note that the pair $(x_0,x_1)$ can be extended to a bi-infinite sequence $(x_i)_{i\in \mathbb{Z}}$ with $R^A(x_i,x_{i+1})$, $ {i\in \mathbb{Z}}$, which is eventually periodic as $i\to +\infty$ and
$i\to -\infty$. From this we get a spiral $M=M_{(x_0,x_1)}$ and a relation preserving map $f\colon M\to A$ such that for some $x'_0,x'_1\in M$ with $R^M(x'_0,x'_1)$,
$f(x'_0)=x_0$ and $f(x'_1)=x_1$. The required spiral structure is the disjoint union 
\[
\bigcup_{\{(x_0,x_1)\in A^2\colon R^A(x_0,x_1)\}} M_{(x_0,x_1)}.
\]
\end{proof}



\begin{proposition}\label{bijekcjaa}
 The closed relations $s_1^\mathbb{L},\ldots,s_m^\mathbb{L}$ are graphs of homeomorphisms of the Cantor set.
\end{proposition}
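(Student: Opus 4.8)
The plan is to show that each closed relation $s_i^\mathbb{L}$ on the Cantor set $\mathbb{L}$ is the graph of a bijection, hence (by compactness and closedness) of a homeomorphism. I would first reduce to a statement about a single relation: the projective Fra\"{i}ss\'{e} limit $(\mathbb{L}, s_i^\mathbb{L})$, viewed as a topological structure in the language $\{s_i\}$ alone, is an inverse limit of surjective relations (members of $\mathcal{G}_0$), and by Lemma \ref{init} the class $\mathcal{G}$ of spiral structures is coinitial in $\mathcal{G}_0$. So for every $A \in \mathcal{G}_0$ with an epimorphism $\mathbb{L} \to A$, we may postcompose to factor through a spiral structure. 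The point is that this lets me import the description of epimorphisms between spirals (Proposition \ref{map}) exactly as was done in Proposition \ref{bijekcja} for the single-spiral-structure limit.

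Next I would fix $i$ and argue that $s_i^\mathbb{L}$ is functional, i.e.\ there do not exist $\alpha, \beta_1, \beta_2 \in \mathbb{L}$ with $\beta_1 \neq \beta_2$, $s_i^\mathbb{L}(\alpha,\beta_1)$, and $s_i^\mathbb{L}(\alpha,\beta_2)$. The argument mimics Proposition \ref{bijekcja}: suppose such a configuration exists; using (L2) pick $A \in \f$ and an epimorphism $\psi_1 \colon \mathbb{L} \to A$ separating $\beta_1$ and $\beta_2$; then using the coinitiality of spiral structures in $\mathcal{G}_0$ (applied to the $s_i$-reduct) together with Proposition \ref{map}, find $B \in \mathcal{G}_0$ and an epimorphism $\phi \colon B \to A$ such that every preimage $x$ of $\psi_1(\alpha)$ has a \emph{unique} $s_i^B$-successor; extend through $\phi$ via the extension property (Proposition \ref{fraisse}) to get $\psi_2 \colon \mathbb{L} \to B$ with $\psi_1 = \phi \circ \psi_2$; conclude $\psi_2(\beta_1) = \psi_2(\beta_2)$, hence $\psi_1(\beta_1) = \psi_1(\beta_2)$, a contradiction. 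The symmetric argument, using $(s_i^{-1})$ in place of $s_i$ (equivalently, reversing edges in the spiral description), shows $s_i^\mathbb{L}$ is injective. Then $s_i^\mathbb{L}$ is the graph of a bijection; since $(\mathbb{L}, s_i^\mathbb{L})$ is a topological structure, $s_i^\mathbb{L}$ is closed, and $\mathbb{L}$ is compact Hausdorff, both the induced map and its inverse are continuous, so $s_i^\mathbb{L}$ is the graph of a homeomorphism of $\mathbb{L} \cong 2^\n$ (the homeomorphism type of $\mathbb{L}$ following as in Proposition \ref{cantor}).

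The one subtlety I would be careful about is that the reduct of $\mathbb{L}$ to the language $\{s_i\}$ need not literally be the projective Fra\"{i}ss\'{e} limit of $\mathcal{G}_0$ or of $\mathcal{G}$; what I actually need is only that (L2) and the extension property survive in the reduct (L2) is immediate since a continuous function to a finite discrete space does not see the language, and for the extension property it suffices that every finite surjective relation $A$ together with an epimorphism $\mathbb{L} \to A$ in the reduct lifts along a given epimorphism $B \to A$ of surjective relations — this follows from the corresponding property for the full structures in $\f$ since, by Theorem \ref{cap}, the structures in $\f$ have arbitrarily fine epimorphisms and the $s_i$-coordinate of such a structure can be made an arbitrary member of $\mathcal{G}_0$ up to coinitiality. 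The main obstacle, then, is bookkeeping: carefully matching the reduct of the $m$-sorted limit to single-relation spiral combinatorics so that Proposition \ref{map} applies verbatim, and verifying that the ``unique successor'' structure $B$ in Proposition \ref{map}'s language can be realized inside a genuine member of $\f$ (not merely of $\f_0$). Once that matching is in place, the rest is exactly the proof of Proposition \ref{bijekcja} run $m$ times.
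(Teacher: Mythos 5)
Your argument is correct, and it rests on the same ingredients as the paper's: pass to the single-relation reduct $(\mathbb{L},s_i^{\mathbb{L}})$, use the coinitiality of spiral structures (Lemma \ref{init}) to access the combinatorics of Proposition \ref{map}, and then run the functionality/injectivity argument of Proposition \ref{bijekcja}. The packaging differs slightly. The paper's official proof does not rerun that argument; it introduces the family $\mathcal{G}'$ of $s_1$-reducts of members of $\f$, observes that $\mathcal{G}'$ and $\mathcal{G}$ are both coinitial projective Fra\"{i}ss\'{e} subfamilies of $\mathcal{G}_0$, and concludes that $(\mathbb{L},s_i^{\mathbb{L}})$ is isomorphic to the limit of $\mathcal{G}$ already analyzed in Section 3 --- so the whole proposition is a transfer along an isomorphism of limits. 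Your version instead verifies directly that (L2) and a suitable extension property survive in the reduct and redoes the contradiction argument there; this is exactly the ``more direct proof'' the paper alludes to in the remark following the proposition. The one step you flag --- realizing a reduct-level epimorphism $B'\to (A,s_i^A)$ with the unique-successor property inside a genuine member of $\f$ --- is real but routine: expand $B'$ to a member of $\f_0$ by pulling back the remaining relations $s_j^A$, $j\neq i$, along $\phi$ (these pullbacks are again surjective relations and the map to $A$ is then a full epimorphism), and then apply Theorem \ref{cap} to refine to a member of $\f$. With that filled in, your proof goes through; the paper's route buys brevity by reusing Proposition \ref{bijekcja} wholesale, at the cost of an appeal to uniqueness of limits of coinitial families, while yours is more self-contained in the reduct.
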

\begin{proof}
In Propositions \ref{cantor} and \ref{bijekcja} we showed that the projective Fra\"{i}ss\'{e} limit of $\mathcal{G}_0$ is the graph of a homeomorphism of the Cantor set.
In Lemma \ref{init} we showed that $\mathcal{G}$ is coinitial in $\mathcal{G}_0$.
Let 
\[
\mathcal{G}'=\{(A,s_1^A)\colon  \mbox{ there are } s_2^A,\ldots,s_m^A \mbox{ such that } (A,s^A_1,\ldots,s_m^A)\in\f\}.
\]
This  also is a coinitial in $\mathcal{G}_0$ projective Fra\"{i}ss\'{e} family.

The projective Fra\"{i}ss\'{e} limits of $\mathcal{G}$ and $\mathcal{G}'$ are isomorphic to each other, and they are also isomorphic to
$(\mathbb{L},s_1^\mathbb{L}),(\mathbb{L},s_2^\mathbb{L}),\ldots,(\mathbb{L},s_m^\mathbb{L})$. In particular,  $s_1^\mathbb{L},s_2^\mathbb{L},\ldots, s_m^\mathbb{L}$ are graphs of homeomorphisms of the Cantor set $\mathbb{L}$.
\end{proof}

\begin{remark}
 One can give a more direct  proof of  Proposition \ref{bijekcjaa},  not referring to Section~3. For example, one can adapt the proof of Proposition \ref{bijekcja}
to our situation. 
\end{remark}

We denote the homeomorphisms whose graphs are $s_1^\mathbb{L},\ldots,s_m^\mathbb{L}$ by $F_1^\mathbb{L},\ldots,F_m^\mathbb{L}$, respectively. We  also  write $(\mathbb{L},F_1^\mathbb{L},\ldots,F_m^\mathbb{L})$ rather than $(\mathbb{L},s_1^\mathbb{L},\ldots,s_m^\mathbb{L})$.

By $P$ or $Q$ we  denote partitions of $2^\n$. All partitions will be clopen partitions.
For $f\in H(2^\n)$ and a partition $P$ we define  
\[
f\restriction P=\{(p,q)\in  P^2\colon f(p)\cap q\neq\emptyset\}.
\]
This is a surjective relation. 
Let $(f_1,\ldots,f_m)\restriction P=(f_1\restriction P,\ldots,f_m\restriction P)$.
Define 
\[
 [P,s^P_1,\ldots,s^P_m]=\{(f_1,\ldots,f_m)\in H(2^\n)^m\colon f_1\restriction P=s^P_1,\ldots,f_m\restriction P=s^P_m\}.
\]

\begin{lemma}\label{basis}
Sets of the form $[P,s^P_1,\ldots,s^P_m]$ are clopen in $ H(2^\n)^m$.
Moreover, they form a topological basis in $ H(2^\n)^m$.
\end{lemma}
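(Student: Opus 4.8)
The plan is to establish both assertions of Lemma~\ref{basis} directly from the definitions, using the fact that the topology on $H(2^\n)^m$ is the product of the topologies on each factor, and that a basic neighbourhood of $f\in H(2^\n)$ is determined by the action of $f$ and $f^{-1}$ on finitely many clopen sets. First I would recall that $H(2^\n)$ carries the topology of uniform convergence, for which a neighbourhood basis at $f$ is given by the sets $\{g : g(p)=f(p) \text{ and } g^{-1}(p)=f^{-1}(p) \text{ for all } p\in P\}$ as $P$ ranges over clopen partitions of $2^\n$; equivalently, fixing the values of $g$ and $g^{-1}$ on the finitely many atoms of a clopen partition. This makes each such set clopen, since it is determined by finitely many equations between clopen sets.

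Next I would show each $[P,s_1^P,\ldots,s_m^P]$ is clopen. Since $[P,s_1^P,\ldots,s_m^P]=\prod_{k=1}^m\{f_k : f_k\restriction P = s_k^P\}$, it suffices to treat a single coordinate: the set $V_{P,s}=\{f\in H(2^\n): f\restriction P = s\}$. Fixing $P$, there are only finitely many surjective relations $s$ on $P$, and the sets $V_{P,s}$ (for those $s$ that actually arise) partition $H(2^\n)$. So it is enough to prove each $V_{P,s}$ is open, whence it is also closed as the complement of a finite union of open sets. Openness of $V_{P,s}$ follows because the condition ``$f(p)\cap q\neq\emptyset$'' or ``$f(p)\cap q=\emptyset$'' for a fixed pair $(p,q)\in P^2$ is, in each case, determined by a common refinement of $P$ under $f$: if $f(p)\cap q=\emptyset$ then any $g$ agreeing with $f$ on the atoms of $P$ also satisfies $g(p)\cap q=\emptyset$; and if $f(p)\cap q\neq\emptyset$, pick a nonempty clopen $R\subseteq p$ with $f(R)\subseteq q$, refine $P$ to a partition $Q$ containing $R$ as an atom, and then every $g$ agreeing with $f$ on the atoms of $Q$ satisfies $g(p)\supseteq g(R)=f(R)\subseteq q$, so $g(p)\cap q\neq\emptyset$. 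Taking a common refinement $Q$ of $P$ handling all finitely many pairs $(p,q)$ simultaneously shows that the basic neighbourhood of $f$ determined by $Q$ is contained in $V_{P,s}$.

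Finally I would argue these sets form a basis. Given any basic open neighbourhood of a tuple $(f_1,\ldots,f_m)$ coming from the product topology, it contains a set of the form $\prod_k W_k$ where each $W_k$ is a basic neighbourhood of $f_k$ determined by some clopen partition $Q_k$ (fixing $f_k$ and $f_k^{-1}$ on the atoms of $Q_k$). Let $Q$ be a common refinement of $Q_1,\ldots,Q_m$ fine enough that $f_k\restriction Q$ together with $f_k^{-1}\restriction Q$ determines the action of $f_k$ and $f_k^{-1}$ on the atoms of $Q_k$ up to the required precision; a standard compactness argument shows such a $Q$ exists, since $f_k$ maps each atom of a sufficiently fine partition into a single atom of $Q_k$, and likewise for $f_k^{-1}$. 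Set $s_k^Q=f_k\restriction Q$. Then $(f_1,\ldots,f_m)\in [Q,s_1^Q,\ldots,s_m^Q]\subseteq \prod_k W_k$, because membership in $[Q,s_1^Q,\ldots,s_m^Q]$ pins down each $g_k\restriction Q$, which in turn pins down $g_k$ and $g_k^{-1}$ on the atoms of $Q_k$. This proves the collection is a basis.

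The main obstacle is the bookkeeping in the last step: one must be careful that fixing $g_k\restriction Q$ really does control $g_k$ finely enough on the coarser partition $Q_k$, and this requires choosing $Q$ so that every atom of $Q$ is mapped by $f_k$ \emph{into} a single atom of $Q_k$ (and similarly for $f_k^{-1}$). The existence of such a $Q$ is where the compactness and zero-dimensionality of $2^\n$ enter; once $Q$ is chosen this way, if $g_k\restriction Q = f_k\restriction Q$ then for an atom $r$ of $Q$ we have $g_k(r)$ contained in the union of atoms $q$ of $Q$ with $(r,q)\in s_k^Q$, hence $g_k(r)$ lies in the same atom of $Q_k$ as $f_k(r)$, and assembling over the atoms of $Q_k$ gives $g_k(p)=f_k(p)$ for $p\in Q_k$, and dually $g_k^{-1}(p)=f_k^{-1}(p)$. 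Everything else is routine.
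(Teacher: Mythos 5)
Your proof is correct and rests on the same key idea as the paper's: refine the given partition by pulling back under the homeomorphisms so that each atom of the fine partition is mapped into a single atom of the coarse one, whence knowing $g_k\restriction Q$ pins down the tuple to within the prescribed neighbourhood. The only differences are cosmetic — you phrase the topology via the combinatorial basis (fixing $g$ and $g^{-1}$ on clopen partitions) where the paper works with $\epsilon$-balls for the sup metric, and you spell out the clopenness claim that the paper merely asserts; one small imprecision is that in the final assembly step $g_k(r)$ should be located in the same atom of the partition $\{f_k(p)\colon p\in Q_k\}$ as $f_k(r)$ (which your refinement condition on $f_k^{-1}$ guarantees), rather than in the same atom of $Q_k$, since only the former yields $g_k(p)\subseteq f_k(p)$.
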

\begin{proof}
Clearly they are clopen sets.
Take any $(g_1,\ldots,g_m)\in H(2^\n)^m$, and take $\epsilon>0$.  Let
 $U=\{(f_1,\ldots,f_m)\colon \forall i\forall x \ d(f_i(x),g_i(x))<\epsilon\}$
(here $d$ is  any metric on $2^\n$).
This is an open set. We want to find a clopen neighborhood of $(g_1,\ldots,g_m)$ that is of the form $[P,s^P_1,\ldots,s^P_m]$ and is contained in $U$.
For this, take first an arbitrary partition $Q$ of $2^\n$ of mesh $<\epsilon$, and
$P=\{ q_0\cap g_1^{-1}(q_1)\cap\ldots\cap g_m^{-1}(q_m)\colon q_0,q_1,\ldots,q_m\in Q\}$.
For $i=1,2,\ldots, m$, we let $s_i^P=\{(p,r)\colon g_i(p)\cap r\neq\emptyset\}$.
Clearly $(g_1,\ldots,g_m)\in [P,s^P_1,\ldots,s^P_m]$.
Now take any $(f_1,\ldots,f_m)\in [P,s^P_1,\ldots,s^P_m]$,
and $p\in P$, say $p= q_0\cap g_1^{-1}(q_1)\cap\ldots\cap g_m^{-1}(q_m)$. Then $g_i(p)\subseteq q_i$ for every $i=1,2,\ldots, m$.
 For any $r\in P$, $f_i(p)\cap r\neq\emptyset$ iff $g_i(p)\cap r\neq\emptyset$ ($i=1,2,\ldots, m$).
Therefore $f_i(p)\subseteq q_i$, $i=1,2,\ldots, m$. Since $\mbox{diam}(q_i)<\epsilon$, 
 for every $i=1,2,\ldots, m$ and $x\in p$, $d(f_i(x),g_i(x))<\epsilon$. Since $p\in P$ was arbitrary, this shows
$(f_1,\ldots,f_m)\in U$.
\end{proof}

\begin{proposition}\label{dense}
 The conjugacy class of  $(F_1^\mathbb{L},\ldots,F_m^\mathbb{L})$ is dense in $H(\mathbb{L})^m= H(2^\n)^m$.
\end{proposition}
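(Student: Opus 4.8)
The plan is to show that the conjugacy class of $(F_1^\mathbb{L},\dots,F_m^\mathbb{L})$ meets every basic open set $[P,s^P_1,\dots,s^P_m]$ (which by Lemma \ref{basis} form a basis of $H(2^\n)^m$), and it suffices to do this when $(P,s^P_1,\dots,s^P_m)$ actually lies in $\f$, since by Theorem \ref{cap} the family $\f$ is coinitial in $\f_0$: given an arbitrary basic set, refine the partition $P$ to a partition $P'$ whose associated structure admits an epimorphism onto $(P,s^P_1,\dots,s^P_m)$ and such that $(P',s^{P'}_1,\dots,s^{P'}_m)\in\f$; then $[P',s^{P'}_1,\dots,s^{P'}_m]\subseteq[P,s^P_1,\dots,s^P_m]$, so it is enough to hit the smaller set. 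Here I am using that, given $(f_1,\dots,f_m)$, refining the partition only produces epimorphic images of the corresponding surjective relation, which is exactly the direction in which $\f$ is coinitial.

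So fix $A=(P,s^P_1,\dots,s^P_m)\in\f$. By projective universality (L1) there is an epimorphism $\phi\colon\mathbb{L}\to A$. The underlying set $P$ is a finite set of clopen pieces partitioning $2^\n$; pulling these back along $\phi$ gives a clopen partition of $\mathbb{L}$, say $\mathbb{L}=\bigsqcup_{p\in P}\phi^{-1}(p)$, into nonempty clopen pieces. Since $\mathbb{L}$ is the Cantor set (this follows as in Proposition \ref{cantor}) and each $\phi^{-1}(p)$ is a nonempty clopen subset of the Cantor set, there is a homeomorphism $g\colon 2^\n\to\mathbb{L}$ carrying the clopen piece of $2^\n$ labelled $p$ onto $\phi^{-1}(p)$ for each $p\in P$ — any two clopen partitions of the Cantor set into the same number of nonempty pieces are related by a homeomorphism. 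Now consider the tuple $(g^{-1}F_1^\mathbb{L}g,\dots,g^{-1}F_m^\mathbb{L}g)\in H(2^\n)^m$. I claim it lies in $[P,s^P_1,\dots,s^P_m]$: for $p,r\in P$ we have $g^{-1}F_i^\mathbb{L}g(p)\cap r\neq\emptyset$ iff $F_i^\mathbb{L}(\phi^{-1}(p))\cap\phi^{-1}(r)\neq\emptyset$ iff (since $s_i^\mathbb{L}$ is the graph of $F_i^\mathbb{L}$) there are $\alpha\in\phi^{-1}(p)$, $\beta\in\phi^{-1}(r)$ with $s_i^\mathbb{L}(\alpha,\beta)$, which by the defining property of an epimorphism is equivalent to $s_i^A(p,r)$, i.e. $s^P_i(p,r)$. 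Hence $(g^{-1}F_i^\mathbb{L}g)\restriction P = s^P_i$ for every $i$, as required.

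This shows the conjugacy class of $(F_1^\mathbb{L},\dots,F_m^\mathbb{L})$ is dense. The only mildly delicate point is the reduction at the start — checking that refining an arbitrary partition $P$ to land inside $\f$ really does shrink the basic open set and is possible at all — but this is exactly the content of Theorem \ref{cap} together with the observation that $f_i\restriction P$ is an epimorphic image of $f_i\restriction P'$ whenever $P'$ refines $P$; and the production of the homeomorphism $g$ matching up two clopen partitions of the Cantor set with equally many pieces is standard. I expect no serious obstacle beyond bookkeeping; the real work has already been done in establishing that $\f$ is a projective Fra\"{i}ss\'{e} family with limit a tuple of homeomorphisms (Theorem \ref{main}, Theorem \ref{cap}, Proposition \ref{bijekcjaa}).
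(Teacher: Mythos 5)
Your proposal is correct and follows essentially the same route as the paper: reduce to basic open sets $[P,s^P_1,\ldots,s^P_m]$ via Lemma \ref{basis}, use projective universality together with the coinitiality of $\f$ in $\f_0$ to produce a clopen partition $Q$ of $\mathbb{L}$ with $(Q,F_1^\mathbb{L}\restriction Q,\ldots,F_m^\mathbb{L}\restriction Q)$ isomorphic to $(P,s^P_1,\ldots,s^P_m)$, and extend that isomorphism of partitions to a homeomorphism realizing the conjugation. The only cosmetic difference is that you apply coinitiality to shrink the basic open set first, whereas the paper uses it to extend projective universality from $\f$ to all of $\f_0$; these amount to the same composition of epimorphisms.
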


\begin{proof}
For a partition $P$ and a tuple of surjective relations $(s^P_1,\ldots,s^P_m)$ on $P$ we consider
\[
 D(P,s^P_1,\ldots,s^P_m)=  \{(f_1,\ldots, f_m)\in H(\mathbb{L})^m\colon 
\exists g\ (g^{-1}f_1 g,\ldots,g^{-1}f_m g)\in [P,s^P_1,\ldots,s^P_m]\}.
\]
Let $ D$ be the intersection of all sets of the form $ D(P,s^P_1,\ldots,s^P_m)$.
From Lemma \ref{basis} it follows that if $(f_1,\ldots,f_m)\in D$, then it  has a dense conjugacy class.

We show that $(F^\mathbb{L}_1,\ldots,F^\mathbb{L}_m)\in D$. 
Fix a partition $P$ and a tuple $(s^P_1,\ldots,s^P_m)$ of surjective relations  on $P$.
From the projective universality of the limit and the coinitiality of $\f$ in $\f_0$, there are a partition $Q$ 
and an isomorphism  $i\colon (P,s^P_1,\ldots,s^P_m)\to (Q,F^\mathbb{L}_1\restriction Q,\ldots,F^\mathbb{L}_m\restriction Q)$.
Now take any $g\in H(\mathbb{L})$ that  extends  $i$, and notice that $(g^{-1}F_1^\mathbb{L} g,\ldots,g^{-1}F_m^\mathbb{L} g)\in [P,s^P_1,\ldots,s^P_m]$.
\end{proof}

\begin{proposition}\label{gdelta}
 The conjugacy class of  $(F_1^\mathbb{L},\ldots,F_m^\mathbb{L})$ is a $G_\delta$ in $H(\mathbb{L})^m= H(2^\n)^m$.
\end{proposition}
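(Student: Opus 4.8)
The plan is to exhibit the conjugacy class of $(F_1^\mathbb{L},\ldots,F_m^\mathbb{L})$ as a countable intersection of open sets in $H(2^\n)^m$. The standard way to do this for a comeager orbit is the following: a tuple $(f_1,\ldots,f_m)$ lies in the conjugacy class of $(F_1^\mathbb{L},\ldots,F_m^\mathbb{L})$ if and only if, roughly speaking, every finite ``approximation'' $(f_1,\ldots,f_m)\restriction P$ can be extended (after conjugation) to match the corresponding approximation of the generic tuple, and moreover two such partial matches can always be amalgamated. Concretely, for each partition $P$ of $2^\n$ let $E(P)$ be the set of those $(f_1,\ldots,f_m)$ for which there is a partition $Q$ and $g\in H(\mathbb{L})$ such that $(g^{-1}f_1g,\ldots,g^{-1}f_mg)\restriction Q$ refines $(F_1^\mathbb{L},\ldots,F_m^\mathbb{L})\restriction Q$ in the sense of mapping onto the structure $(P, f_1\restriction P,\ldots,f_m\restriction P)$ via an epimorphism; since conjugation by an element of $H(\mathbb{L})$ and the condition ``$\restriction Q$ equals a prescribed surjective relation tuple'' are both clopen/open conditions in the relevant sense (Lemma \ref{basis}), each $E(P)$ is open. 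First I would show that $\bigcap_P E(P)$, intersected with the density set $D$ from Proposition \ref{dense}, is exactly the conjugacy class of $(F_1^\mathbb{L},\ldots,F_m^\mathbb{L})$.

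Here is how I would organize the argument. First I would observe that if $(f_1,\ldots,f_m)$ is conjugate to $(F_1^\mathbb{L},\ldots,F_m^\mathbb{L})$, then clearly each $E(P)$ condition holds, so the conjugacy class is contained in $\bigcap_P E(P)$. For the converse, suppose $(f_1,\ldots,f_m)\in \bigcap_P E(P)$. The point is that $(2^\n, f_1,\ldots,f_m)$ can be realized as an inverse limit of structures from $\mathcal{F}_0$: taking a sequence of partitions $P_1$ refining $P_2$ refining $\cdots$ with meshes going to zero and union generating the topology (so $2^\n = \varprojlim P_k$), the relations $f_i\restriction P_k$ form a compatible system, and $(2^\n, f_1,\ldots,f_m) = \varprojlim (P_k, f_1\restriction P_k,\ldots,f_m\restriction P_k)$. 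Now I would invoke Proposition \ref{fraisse}(ii): to show $(2^\n, f_1,\ldots,f_m)$ is isomorphic to $\mathbb{L}$, it suffices to verify projective universality, the extension property, and (L2) for this structure. Projective universality and (L2) follow from coinitiality of $\f$ in $\f_0$ together with the $E(P)$ conditions (every finite structure of $\f_0$, hence of $\f$, receives an epimorphism from some $P_k$, hence from $(2^\n, f_1,\ldots,f_m)$). The extension property is where the $E(P)$ conditions do real work: given an epimorphism $\phi_1\colon B\to A$ in $\f$ and an epimorphism $\phi_2\colon (2^\n, \bar f)\to A$ factoring through some $(P_k, \bar f\restriction P_k)$, I use $E(P_k)$ (after a finer partition) to produce the lift into $B$. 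An isomorphism $(2^\n, f_1,\ldots,f_m)\cong \mathbb{L}$ is then, in particular, a homeomorphism of $2^\n$ conjugating one tuple to the other, so $(f_1,\ldots,f_m)$ is in the conjugacy class. Finally, since there are only countably many ``combinatorial types'' of pairs $(P, s^P_1,\ldots,s^P_m)$ up to isomorphism — or one can fix a countable cofinal family of partitions — the intersection $\bigcap_P E(P)$ is really a countable intersection, hence $G_\delta$; together with $D$ (itself a countable intersection of open sets by Proposition \ref{dense}) this makes the conjugacy class $G_\delta$.

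The main obstacle I expect is making precise the ``extension property via $E(P)$'' step and confirming that each $E(P)$ is genuinely open rather than merely Borel. The subtlety is that $E(P)$ quantifies existentially over a partition $Q$ and a conjugator $g\in H(\mathbb{L})$, and one must argue that the truth of the condition for a given $(f_1,\ldots,f_m)$ persists under small perturbations; the right way is to note that once $Q$ and $g$ are found, the condition ``$(g^{-1}f_1g,\ldots,g^{-1}f_mg)\restriction Q$ equals a fixed tuple of surjective relations'' is a basic clopen condition on $(f_1,\ldots,f_m)$ by Lemma \ref{basis} (pulling back through the continuous conjugation map $h\mapsto g^{-1}hg$), so $E(P)$ is a union of clopen sets, hence open. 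A secondary point requiring care is verifying (L2) and projective universality for the inverse-limit structure $(2^\n, f_1,\ldots,f_m)$ directly from the hypotheses; this is routine given coinitiality of $\f$ in $\f_0$ but should be spelled out. Once these are in place the proof follows the template of Proposition \ref{dense} closely, and indeed the author signals (in the proof of the earlier proposition about $(\mathbb{L},F^\mathbb{L})$ and in the Example) that this $G_\delta$ argument is meant to parallel the spiral-structure case.
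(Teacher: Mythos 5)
Your overall template is the right one, and it is the same as the paper's: characterize the conjugacy class of $(F_1^\mathbb{L},\ldots,F_m^\mathbb{L})$ as the set of tuples $\bar f=(f_1,\ldots,f_m)$ for which $(2^\n,f_1,\ldots,f_m)$ satisfies projective universality, the extension property, and (L2); invoke Proposition \ref{fraisse}(ii); and write these conditions as a countable intersection of open sets. The gap is in the concrete open sets you propose. Each $E(P)$ is a purely existential condition (``there exist $Q$ and $g$ such that the conjugate of $\bar f$ matches the generic tuple on $Q$ and projects onto $(P,\bar f\restriction P)$''). In any reasonable reading, $\bigcap_P E(P)$ together with membership in $D$ says only that conjugates of $\bar f$ approximate $(F_1^\mathbb{L},\ldots,F_m^\mathbb{L})$ arbitrarily well and vice versa, i.e.\ that the two conjugacy classes have the same closure. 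That does not imply conjugacy: a sequence of better and better partial matches, each witnessed by a \emph{different} conjugator $g_P$, does not cohere into a single conjugating homeomorphism without a back-and-forth argument, and the back-and-forth is exactly what the extension property is for. You say the extension property ``is where the $E(P)$ conditions do real work,'' but no derivation of the extension property from $\bigcap_P E(P)$ is given, and I do not see one: your conditions compare $\bar f$ with $\bar F^\mathbb{L}$ externally, whereas the extension property is an internal lifting statement about $(2^\n,\bar f)$, quantified \emph{universally} over epimorphisms $\phi\colon B\to A$ in $\f$ and over epimorphisms of $(2^\n,\bar f)$ onto $A$. A purely existential family of sets cannot capture it.

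The fix --- and the paper's actual proof --- is to index the open sets by the data of the extension property itself. For each epimorphism $\phi\colon B\to A$ between structures in $\f$ and each continuous surjection $g\colon 2^\n\to A_0$ (there are only countably many such $g$, since there are only countably many clopen partitions of $2^\n$), let $E_{\phi,g}$ be the set of tuples $\bar f$ such that \emph{if} $g$ is an epimorphism from $(2^\n,\bar f)$ onto $A$, \emph{then} there is a continuous surjection $h$ with $g=\phi\circ h$ that is an epimorphism onto $B$. The set $H(g,A)$ of tuples making $g$ an epimorphism is clopen (this is essentially the observation you make via Lemma \ref{basis}), so $E_{\phi,g}=\left(H(2^\n)^m\setminus H(g,A)\right)\cup\bigcup_{h}H(h,B)$ is open; the ``if--then'' form is precisely what lets a universally quantified property be cut into countably many open pieces. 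Intersecting these with the sets $U_A$ (existence of some epimorphism onto $A$; clearly open) yields exactly the tuples satisfying (L1) and the extension property, while (L2) holds automatically for every $(2^\n,\bar f)$. With this setup the density set $D$ from Proposition \ref{dense} is not needed in the $G_\delta$ argument at all.
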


\begin{proof}
We show that the set of $(f_1,\ldots,f_m)\in H(2^\n)^m$ such that $(2^\n,f_1,\ldots,f_m)$ satisfies (L1), the extension property, and (L2), is a $G_\delta$.
From Proposition \ref{fraisse} (ii), these are exactly structures  that are isomorphic to the projective Fra\"{i}ss\'{e} limit $(\mathbb{L},F_1^\mathbb{L},\ldots,F_m^\mathbb{L})$,
that is, structures that are conjugate to $(\mathbb{L},F_1^\mathbb{L},\ldots,F_m^\mathbb{L})$.

1. Given $A\in\mathcal{F}$, we notice that 
\[
 U_A=  \{(f_1,\ldots,f_m)\in H(2^\n)^m\colon \mbox{ there is an epimorphism from } 
 (2^\n,f_1,\ldots,f_m) \mbox{ onto } A\}
 \]
is open.

2. Given $A=(A_0,s_1^{A},\ldots,s_m^{A}),B=(B_0,s_1^{B},\ldots,s_m^{B})\in \mathcal{F}$, $\phi\colon B\to A$, and a continuous surjection $g\colon 2^\n\to A_0$,
consider 
\begin{equation*}
\begin{split}
E_{\phi,g}=&\{(f_1,\ldots,f_m)\in H(2^\n)^m\colon \mbox{ if } g\colon (2^\n,f_1,\ldots,f_m)\to A \mbox{ is an epimorphism,}  \\
& \mbox{then there is } h\colon (2^\n,f_1,\ldots,f_m)\to B \mbox{ such that }  g=\phi\circ h\}.
\end{split}
\end{equation*}
We show that this set is open.

For $A$ and $g\colon 2^\n\to A_0$ as above we define
\[
 H(g,A)=\{(f_1,\ldots,f_m)\in H(2^\n)^m\colon g\colon(2^\n,f_1,\ldots,f_m)\to A \mbox{ is an epimorphism} \}.
\]
This is a clopen set in $H(2^\n)^m$. Therefore
\begin{equation*}
\begin{split}
E_{\phi,g}=  \left(H(2^\n)^m\setminus H(g,A)\right)\cup \left( \bigcup_{h} H(h,B) \right),
\end{split}
\end{equation*}
where the union is taken over  continuous surjections $h\colon 2^\n\to B_0$ such that $g=\phi\circ h$, is an open set.
Since there are only countably many clopen decompositions of $2^\n$, there are only countably many 
continuous surjections $g\colon 2^\n\to A_0$.

3. Clearly, every  $(2^\n, f_1,\ldots,f_m)$ satisfies (L2).

Hence, 
\[
\left(\bigcap_A U_A \right)\cap \left(\bigcap_{\phi,g} E_{\phi,g} \right)
\]
is a $G_\delta$ set.
It consists exactly of $(f_1,\ldots,f_m)\in H(2^\n)^m$ such that $(2^\n, f_1,\ldots,f_m)$ satisfies (L1), the extension property, and (L2).

\end{proof}

\begin{proof}[Proof of Theorem \ref{verymain}]
 This follows from  Theorems \ref{main}, \ref{cap}, and \ref{ample}.
\end{proof}

\address{\noindent Department of Mathematics, University~of~Illinois~at~Urbana-Champaign, \\1409 W. Green St.,
 Urbana, IL 61801 }

   \email{akwiatk2@illinois.edu}

\end{document}